\documentclass[A4paper]{article}

\usepackage[authoryear]{natbib}
\usepackage{subfig}
\usepackage[labelfont=bf]{caption}
\usepackage{graphicx}
\usepackage{multirow}%
\usepackage{tabularx,array}%
\usepackage{amsmath,amssymb,amsfonts,amsthm,stmaryrd,mathrsfs}%
\usepackage[title]{appendix}%
\usepackage{xcolor}%
\usepackage{rotating}
\usepackage{hyperref}
\hypersetup{
    colorlinks=true,
    linkcolor=blue,
    citecolor=cyan,      
    urlcolor=cyan
    }
\usepackage{booktabs}%
\usepackage{tikz}
	\usetikzlibrary{arrows.meta}
	\usetikzlibrary{shapes}
	\usetikzlibrary{calc}
	\usetikzlibrary{positioning,intersections}
	\usetikzlibrary{shapes.geometric}
	\usetikzlibrary{matrix} 
	\usetikzlibrary{fit}
\usepackage{tikz-qtree}
\usepackage{environ}
\usepackage{etoolbox}
\makeatletter
 \def\@makefnmark{\hbox{\@textsuperscript{\normalfont\@thefnmark}}}

\usepackage{environ}
\makeatletter
\newsavebox{\measure@tikzpicture}
\NewEnviron{scaletikzpicturetowidth}[1]{%
  \def\tikz@width{#1}%
  \begin{lrbox}{\measure@tikzpicture}%
  \BODY
  \end{lrbox}%
  \pgfmathparse{#1/\wd\measure@tikzpicture}%
  \BODY
}
    
\makeatother
\allowdisplaybreaks

\theoremstyle{thmstyleone}%
\newtheorem{theorem}{Theorem}%  meant for continuous numbers
\newtheorem{proposition}[theorem]{Proposition}% 

\theoremstyle{thmstyletwo}%
\newtheorem{remark}{Remark}%

\theoremstyle{thmstylethree}%
\newtheorem{definition}{Definition}%

\begin{document}

\title{Kleisli semantics and hypergraph composition for Greimasian narrative programs}
\maketitle

\author{Michael Fowler: synthifou@gmail.com}

\begin{abstract}
This article proposes a category-theoretic formalization of Greimasian narrative programs (NPs) that makes their compositional structure mathematically precise. Building on a reconstruction of the actantial model as a categorical schema, we introduce a refined typological schema of actants and derive Set-valued instances corresponding to role-indexed elements of a narrative. NPs are represented within a categorical schema whose morphisms are interpreted using monads on $\mathbf{Set}$. In particular, the List monad provides a Kleisli semantics for modeling non-atomic, list-valued actantial configurations, while the Maybe monad encodes optional dependencies between programs. This yields a minimal representation of narrative programs as structured data with an intrinsic compositional interpretation. To account for the dynamics of narrative formation, we lift these constructions into a diagrammatic setting by freely generating a symmetric monoidal category, and subsequently a hypergraph category, from the set of actants. In this framework, narrative programs act as generators of morphisms, and their composition is realized through wiring diagrams. A narrative trajectory is thereby interpreted as a single composite morphism. This approach provides a unified mathematical framework for structural semiotics, connecting data-level representations of narrative elements with their compositional realization in discourse.
\end{abstract}

%\keywords{conceptual graphs, semantic graphs, Fran\c{c}ois Rastier, structural semiotics}

%%\pacs[JEL Classification]{D8, H51}

%%\pacs[MSC Classification]{35A01, 65L10, 65L12, 65L20, 65L70}

\section{Introduction}

By the late 1960s, the research program of the semiotician Algirdas Julien Greimas had begun to consolidate into what is now known as the Paris School of structural semiotics. Drawing on Ferdinand de Saussure's structural linguistics of difference, Roland Barthes's semiology of narrative, and Claude L\'{e}vi-Strauss's analysis of constitutive units through homologation, this tradition developed a systematic framework for the analysis of meaning in both literary and visual discourses. \citep{GreimasCollinsPerron1989} Central to this framework is the idea that meaning emerges not from isolated elements, but from structured relations between abstract roles and their instantiations.

Greimas built a framework for a structural semiotics that was based on the notion of a \textit{seme}, or smallest irreducible component of meaning, that in combination form a \textit{sememe}, or the smallest compound unit of meaning. A seme is analogous to a morpheme or lexeme in linguistics or a phoneme in phonology. A seme is not autonomous or `atomistic,' but instead ``exists only because of the differential gap that opposes it to other semes.'' \citep[page 278]{GreimasCourtes1979} That is to say that semes themselves embody a structure of opposition, or what Greimas identified as \textit{semic categories}. \citep[page 16]{Greimas1987}  

These semic categories constitute the \textit{content plane}, and represent the semantic or meaningful layer of signification, and thus organise  concepts, values, and narrative structures. \citep[page xix]{Greimas1983} This plane is not tied to any specific mode of expression---such as a language, image, or gesture---but is instead the deep, generative level from which meaning emerges across different forms. This sits in contrast to the \textit{expression plane} which is how the content becomes manifested. In the example of a literary text, the expression plane includes the syntax, vocabulary, and narrative style it uses, whereas the content plane contains the underlying narrative functions, actants (roles), and thematic structures. Here, Greimas emphasized that the relationship between these planes is not fixed but dynamic. It is shaped by semiotic processes such as modalisation and isotopy which mediate between deep structures and surface-level manifestations.

Among Greimas's most influential contributions is the formalization of narrative programs (NPs) which provide a minimal schema for describing transformations in a narrative. Here, Greimas was concerned with identifying elementary forms of narrativity, firstly between a `Subject' and an `Object,' and how utterances describe their states, transformations and relations. \citet[page 90]{Greimas1987} proposed that there exists a functional relation between Subject and Object as a semic category in which junctions (links) can be described either through:
\begin{align}
\textit{Conjunctive utterances}\,&=S \cap O,\\
\textit{Disjunctive utterances}\,&=S \cup O.
\end{align}
Greimas uses the set theoretic symbols for union and intersection to denote the type of link between actants, while others such as \citet[page 160]{Hebert2020} use roman lowercase letters `u' (for $\cup$ as a disjunction) and `n' (for $\cap$ as a conjunction). We note here that the set theoretic symbols that Greimas uses should not be assumed to also imply their logical operator counterparts of `$\wedge$' as logical conjunction and `$\vee$' as logical disjunction. This is because the case of a disjunctive utterance written as 
\[
(\mathrm{Subject} \vee \mathrm{Object}),
\]
would yield a truth statement in which when $S$ is true and $O$ is true then both are true. But in Greimasian semiotics, disjunctive narrative programs are usually framed as \textit{virtualisations}, that is, when a Subject is `not in possession' of an Object, or an Object is `missing.' This would imply an exclusive disjunction ($\oplus$) as more appropriate: i.e. either the Subject exists or the Object exists but only one can be true. In this article we use the original notation of Greimas, though interpret the underlying semantics of `$\cup$' to represent an exclusive disjunction.

As H\'{e}bert notes, ``the formula for the narrative program . . . can be verbalized or explained as follows: the function by which a subject 1 (subject of doing) causes a subject 2 (subject of state) to be conjoined with (or disjoined from) an object (object of state).'' \citep[page 160]{Hebert2020} NP formulas can thus be denoted in the following way:
\begin{align}\label{NPcap}
NP_{i}:&=F\{S_{1} \rightarrow (S_{2} \cap O)\},\\
NP_{i}:&=F\{S_{1} \rightarrow (S_{2} \cup O)\},\label{NPcup}
\end{align}
where $NP$ is an $i$-indexed narrative program with a junction type (conjunctive $=\cup$, disjunctive $=\cap$), $O$ is an object, and $S_{1}$ is a \textit{subject of doing} and $S_{2}$ is a \textit{subject of state}. Greimas defines a subject of doing as the actant that performs actions to acquire or relinquish an object, for which their ability to perform this task is based on competence, which comprises modal states such as \textit{being-able-to-do}, \textit{knowing-how-to-do} or \textit{having-to-do}. \citep[page 25]{RinghamRingham2006} This is in contrast to the subject of state which is also known as a \textit{patient} in  a narrative programme. Here, a subject of state is an actant that is defined by its relationship to a particular state, rather than by its actions. It contrasts with the subject of doing (i.e., an agent), which acts to change that state. While the subject of doing is dynamic and performs actions, the subject of state represents the modality of \textit{being} or a moment of stability (or lack thereof).

As found in the NPs given in Equation~(\ref{NPcap}) and Equation~(\ref{NPcup}), narrative programs admit a recursive structure through the inclusion of the function $F$, which allows for the nesting of transformations. For example, consider the followings NPs:
\begin{align}
NP_{1}:&=\{S_{1} \rightarrow (S_{2} \cap O_{1})\},\\
NP_{2}:&=\left\{NP_{1}\{S_{3} \rightarrow (S_{4} \cap O_{1})\}\right\}.
\end{align}
\citet[page 161]{Hebert2020} describes the example of $NP_{2}$ as a program of \textit{manipulation} that is instantiated through the modality of a \textit{causing-to-do}. The conjunction in $NP_{1}$ of the subject ($S_{1}$) with an object ($O_{1}$) is the cause for another subject ($S_{3}$) to acquire the same object ($O_{1}$). This nesting feature allows for the articulation of complex narrative dynamics from simple combinatorial forms. 

Despite their formal appearance, however, narrative programs have largely remained at the level of semiotic notation, without a fully explicit mathematical semantics. This paper proposes a categorical formalization of narrative programs that makes their compositional structure precise. Building on previous work in which the Greimasian actantial model was interpreted as a categorical schema, we extend this perspective by introducing Kleisli instances over the List monad. This allows us to relax the atomicity of database entries and represent narrative relations as list-valued structures, capturing the multiplicity of objects and actants involved in a given program. In this setting, narrative programs are represented minimally as list-valued bindings between actants and objects, together with a mode (conjunctive or disjunctive) encoded at the schema level.

To move from static representation to dynamic structure, we then reinterpret these constructions within the framework of wiring diagrams. By extracting a discrete category of role-indexed actants from a Set-valued instance, and freely generating a symmetric monoidal (and subsequently hypergraph) category, we obtain a diagrammatic language in which narrative programs act as generators of morphisms. This provides a compositional account of discoursivization, in which complex narratives arise through the substitution and composition of simpler programmatic units.

\subsection{Summary}

Starting from the categorical schema of the Greimas actantial model $\mathcal{A}$, we construct a refined typological schema of actants through:
\begin{equation}
\mathcal{A}\rightarrow\mathcal{A}_{D}^{\ast}\xrightarrow{\Delta} \mathcal{A}^{\prime}\xrightarrow{I}\mathbf{Set},
\end{equation}
which is an application of a constant functor to obtain a single-object category $\mathcal{A}^{\prime}$ whose Set-valued instances yield role-indexed actants. Narrative programs (NPs) are then represented within a categorical schema $\mathcal{N}$ whose morphisms are interpreted using monads on $\mathbf{Set}$. In particular, the List monad provides a Kleisli semantics for the aspect (morphism) `actorializes', allowing non-atomic, list-valued representations of actantial configurations, while the Maybe monad encodes the optional dependency relation between NPs. This yields a minimal representation of narrative programs as list-valued structures equipped with compositional semantics.

To model the dynamics of narrative composition, we pass from these data-level representations to a diagrammatic setting by constructing the free symmetric monoidal category $\mathbf{FSM}(\mathbf{Disc}(X))$ on the discrete category of actants. After introducing a narrative program generator $\llbracket p \rrbracket$ as a morphism, we equip this structure with Frobenius operations to obtain a hypergraph category $\mathcal{H}(X)$. In this setting, narrative programs act as generators of morphisms, and their composition is realized through wiring diagrams.

The main result of the paper establishes that a narrative trajectory is given by a morphism
\[
\nu : A \to B
\]
in $\mathcal{H}(X)$, constructed from narrative program generators via composition, tensor product, symmetry, and Frobenius structure. This provides a formal account of discoursivization as a compositional process, in which complex narratives arise from the structured interaction of simpler programmatic units.

\section{Formalising narrative programs}

In previous work we introduced a category-theoretic framework for understanding the role of actants within Greimas's \textit{actantial model}, which we drawn on in this article for the basis of our exploration of NPs.  The actantial model of Greimas was originally developed as a refinement of the work of \citet{Propp1984} in Russian fairytales, and sought to describe a relational system in which roles within narratives are defined not by intrinsic properties, but by their functional positions within a narrative configuration. Greimas originally depicted the model as follows:
\begin{align}
\begin{tikzpicture}\label{GreimasAM}
\node[] at (0,0)(Objet){Objet};
\node[] at (-3,0)(Destinateur){Destinateur};
\node[] at (3,0)(Destinataire){Destinataire};
\node[] at (0,-1.5)(Sujet){Sujet};
\node[] at (3,-1.5)(Adjuvant){Adjuvant};
\node[] at (-3,-1.5)(Opposant){Opposant};
\draw[thick,->](Destinateur)--(Objet);
\draw[thick,->](Objet)--(Destinataire);
\draw[thick,->](Adjuvant)--(Sujet);
\draw[thick,->](Sujet)--(Opposant);
\draw[thick,->](Sujet)--(Objet);
\end{tikzpicture}
\end{align}
The model consists of a collection of actants and directed edges (indicating flow of influence). The actants are labelled: `Subject,' (\textit{Sujet}) `Object,' (\textit{Objet}) `Sender,' (\textit{Destinateur}) `Helper,' (\textit{Adjuvant}) `Opponent,' (\textit{Opposant}) and `Receiver' (\textit{Destinataire}) that are organised along semantic `axes' corresponding to the themes of `desire' (\{Subject, Object\}), `knowledge' (\{Sender, Object, Receiver\}) and `power' (\{Helper, Subject, Opponent\}). 

Actants are functional or \textit{syntagmatic units} that subsume narrative roles within a given trajectory rather than denoting fixed entities. \citep[p. 5]{GreimasCourtes1979} An actant does not correspond to a specific object or agent, but to an abstract role that may be instantiated by diverse kinds of entities depending on the narrative context. This further allows for \textit{actantial syncretism}, or a situation in which a single actor embodies multiple actants within a story, or an actant switches roles within a story. Actants themselves can be varied and are not limited to sentient beings but can include: forces of nature, agents, objects, collectives, or purely abstract/conceptual elements. \citep[p. 138]{Hebert2020} Actants are thus defined by the relations they sustain within a narrative configuration, and not by any intrinsic or ontological properties they posses. While the model is typically presented diagrammatically, its underlying structure admits a formal interpretation in terms of objects and relations, making it amenable to categorical reconstruction.

\subsection{Categorical schemas and the actantial model}

Given the original visualisation of the actantial model of Greimas we gave in Equation~(\ref{GreimasAM}), it was showed in \citet{Fowler2026} how the  model can be re-constructed under the framework of \textit{ontological logs} of \citet{SpivakKent2012}, thus establishing an equivalence \citep{Spivak2010} to a small category in $\mathbf{Set}$, which further allows for the analysis of actantial roles in a given narrative as instances on a codomain of the functor $I$. Below we give definitions of these structures.

\begin{definition}[Categorical schema]\label{CategoricalSchemaDef}\normalfont
A \emph{categorical schema} is a 2-tuple $\mathcal{C}=(G, \simeq)$ in which $G$ is a directed multi-graph, and $\simeq$ is a categorical path equivalence on $G$. An \emph{ontological log} (or \emph{olog}) of a categorical schema is the labelling of the vertices $V$, and arrows $A$ of $G$ such that:
\begin{itemize}\setlength\itemsep{0.1em}
\item a vertex $v$ represents a \emph{type} or set of objects that form a class, for which its label is written as a \emph{singular indefinite noun phrase} inscribed in a rectangle.
\item an arrow $a$ represents an \emph{aspect} or function between types, for which its label is written as a verb phrase.
\item an olog is accompanied with a set of equations that are path equivalences statements.
\end{itemize}
\end{definition}

\begin{definition}[Instance on a schema]\label{InstanceDef}\normalfont
Let $\mathcal{C}$ be a categorical schema. An instance on $\mathcal{C}$ is a functor
\begin{equation}
I : \mathcal{C} \to \mathbf{Set}
\end{equation}
such that objects of $\mathcal{C}$ are mapped to sets of instances and morphisms are mapped to functions respecting all specified path equivalences.
\end{definition}

%FIG....................................................................................................................
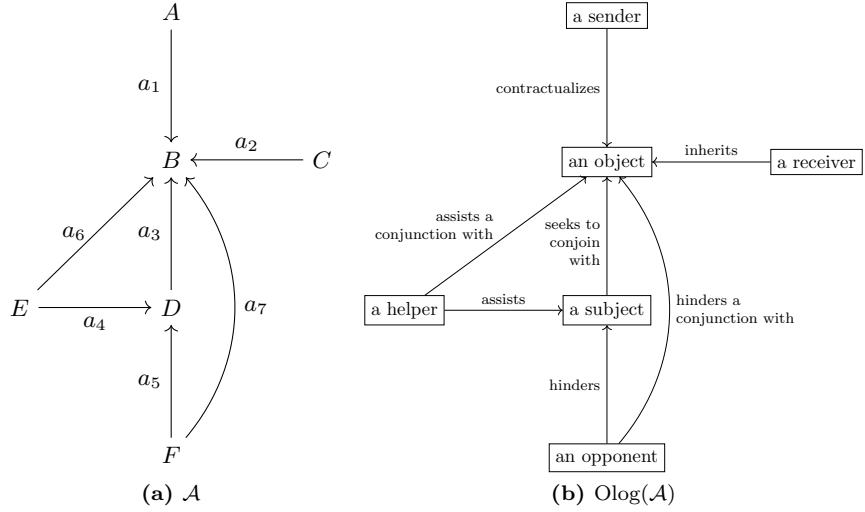
\begin{figure}
\centering
\subfloat[$\mathcal{A}$\label{CategoryA}]{
\begin{tikzpicture}[node distance=15mm,font=\small]
\node at (0,0)(S1){$A$};
\node[below=of S1] (O1){$B$};
\node[right=of O1] (R){$C$};
\node[below=of O1] (S2){$D$};
\node[left=of S2] (H){$E$};
\node[below=of S2] (O2){$F$};
\draw[->](S1)--node[left,pos=0.5]{$a_{1}$}(O1);
\draw[->](R)--node[above,pos=0.5]{$a_{2}$}(O1);
\draw[->](S2)--node[left,pos=0.5]{$a_{3}$}(O1);
\draw[->](H)--node[below,pos=0.5]{$a_{4}$}(S2);
\draw[->](O2)--node[left,pos=0.5]{$a_{5}$}(S2);
\draw[->](H)--node[left,pos=0.5]{$a_{6}$}(O1);
\draw[->](O2) to [bend right=40]node[right,pos=0.5]{$a_{7}$}(O1);
\end{tikzpicture}
}
%\hfill
\subfloat[$\mathrm{Olog}(\mathcal{A})$\label{OlogA}]{
\resizebox{0.56\textwidth}{!}{
\begin{tikzpicture}[A-node/.style={draw,rectangle},node distance=22mm]
\node[A-node] at (0,0)(S1){a sender};
\node[below=of S1,A-node] (O1){an object};
\node[right=of O1,A-node] (R){a receiver};
\node[below=of O1,A-node] (S2){a subject};
\node[left=of S2,A-node] (H){a helper};
\node[below=of S2,A-node] (O2){an opponent};
\draw[->](S1)--node[left,align=right,pos=0.5,font=\footnotesize]{contractualizes}(O1);
\draw[->](R)--node[above,align=right,pos=0.5,font=\footnotesize]{inherits}(O1);
\draw[->](S2)--node[left,align=right,pos=0.45,font=\footnotesize]{seeks to\\conjoin\\with}(O1);
\draw[->](H)--node[above,align=right,pos=0.5,font=\footnotesize]{assists}(S2);
\draw[->](O2)--node[left,align=right,pos=0.5,font=\footnotesize]{hinders}(S2);
\draw[->](H)--node[align=right,left,pos=0.5,font=\footnotesize,yshift=5,xshift=-3]{assists a\\conjunction with}(O1);%actualizes
\draw[->](O2) to [bend right=40]node[align=left,right,pos=0.5,font=\footnotesize]{hinders a\\conjunction with}(O1);%virtualizes
\end{tikzpicture}
}
}
\caption{\textbf{(a)} Categorical schema $\mathcal{A}$ of the Greimas actantial model with path equivalences: $a_{3}\circ a_{4}\simeq a_{6}$, and $a_{3}\circ a_{5}\simeq a_{7}$. \textbf{(b)} Ontological log of the categorical schema $\mathcal{A}$ of the Greimas actantial model with equivalent aspects given as: $(\textrm{seeks to cojoin with})\circ (\textrm{assists})\simeq (\textrm{assists a conjunction with})$, and $(\textrm{seeks to cojoin with})\circ (\textrm{hinders})\simeq (\textrm{hinders a conjunction with})$.}
\label{ActantialModel}
\end{figure}
%.....................................................................................................................

In Figure~\ref{ActantialModel} we give both a diagram of the small category $\mathcal{A}$ of the Greimas actantial model, and its accompanying olog (Figure~\ref{OlogA}). In order to motivate our exploration of NP formulation, we turn to the narrative of \textit{The Hare \& the Tortoise} for use as an example case study (see Appendix~\ref{Hare+Tortoise}). Using the actantial model as a basis for establishing the different actor roles found within the Aesop fable, we apply $I(x)$ for all $x \in \mathcal{A}$ to derive the following collection of actants:
\begin{align*}
I(\mathrm{Subject})&= \{\mathrm{Tortoise, Hare}\},\\
I(\mathrm{Object})&= \{\mathrm{race \,\,win, challenge, justification}\},\\
I(\mathrm{Sender})&= \{\mathrm{Tortoise}\},\\
I(\mathrm{Receiver})&= \{\mathrm{Tortoise}\},\\
I(\mathrm{ Helper})&= \{\mathrm{consistency, perseverence, Fox, fairness}\},\\
I(\mathrm{Opponent})&= \{\mathrm{overconfidence, underestimation, laziness, nap}\}.
\end{align*}
As \citet[page 9]{Spivak2010} suggests, because each categorical scheme consists of a multi-graph $G$ together with an equivalence relation, each object $x \in \mathrm{Ob}(\mathcal{A})$ can be represented as a table with an `ID' column and instances as rows. Each arrow from an object $x$ to another object $x^{\prime}$ in $\mathcal{A}$ is represented in the table of $x$ as a foreign key---that is, a column whose label points to another table, and whose row data is in the ID column of the corresponding foreign key. For example consider the following two linked tables from $\mathrm{Olog}(\mathcal{A})$ in Figure~\ref{OlogA}:
\begin{equation}
\begin{tikzpicture}]font=\small]
\tikzset{
  ObjectTable/.pic={
		\node at (0,0)(Object){
			\begin{tabular}{@{} | l || @{}}
			\hline
			\multicolumn{1}{|c|}{\textbf{an object}}\\ \hline
			\textbf{ID}\\ \hline
			race win\\ \hline
			challenge\\ \hline
			justification\\ \hline
			\end{tabular}
			};
  },
  SenderTable/.pic={
 		\node at (0,0)(Sender){
			\begin{tabular}{| l || l |}
			\hline
			\multicolumn{2}{|c|}{\textbf{a sender}}\\ \hline
			\textbf{ID}&\textbf{contractualises}\\ \hline
			Tortoise&challenge\\ \hline
			\end{tabular}
			};
  		}
}
\pic [anchor=north,local bounding box=Table1] at (0,0) {SenderTable}; 
\pic [local bounding box=Table2,right=40mm of Table1.north,anchor=north] {ObjectTable}; 
\end{tikzpicture}
\end{equation}

In this setup though, we are limited to \textit{atomic} database instances. By this we mean that every instance as $I(x) \in \mathcal{A}$ is represented through a separate row. In the next section we follow Spivak's innovative use of Kleisli $\top$-instances \citep{Spivak2012} in order to relax the atomicity requirement for databases. This will allow us to represent NPs that follow the form
\[
\{\mathrm{Hare} \rightarrow (\textrm{Hare} \cup \{\textrm{consistency, perseverance}\})\},
\]
through database entries that associate an actant to a set of actants or vice versa. This approach not only allows for more compact representations of actantial data for syncretic or typological modelling, but aligns to what \citet[page 278]{GreimasCourtes1979} identify as the fundamental property of structural semiotics---that semic categories on the content plane, and their subsequent compound constructs such as NPs, isotopies and molecules are \textit{relational} and never purely \textit{substantial}. 

\subsection{Kleisli database instances}

Although monads and their Kleisli categories have been commonly applied in computer science through parsers and language processing, they can provide a structural approach to semiotics with a unique utility. We will use them to firstly store NPs in list form, and then use them as the basis to further explore the position that as ``simple syntactic units,'' \citep[page 206]{GreimasCourtes1979} NPs extend and compose \textit{narrative trajectories} across myriad instances. 

In order to represent set-valued elements from the Aesop fable that account for both conjunctive (Equation~(\ref{NPcap})) and disjunctive (Equation~(\ref{NPcup})) NPs, we begin with a refining of the categorical schema of the Greimas actantial model through deriving a category $\mathcal{A}_{D}^{\ast}$, which is then collapsed by a \textit{constant functor} $\Delta: \mathcal{A}_{D}^{\ast}\rightarrow\mathcal{A}^{\prime}$, after which a Set-valued instance $I:\mathcal{A}^{\prime}\rightarrow \mathbf{Set}$ yields the underlying role-indexed actants. We sumerise this construction as:
\begin{equation}
\mathcal{A}\rightarrow\mathcal{A}_{D}^{\ast}\xrightarrow{\Delta} \mathcal{A}^{\prime}\xrightarrow{I}\mathbf{Set}.
\end{equation}

\begin{definition}[Refined typological schema of actants]\normalfont
Let $\mathcal{A}$ be the categorical schema corresponding to the Greimas actantial model (Figure~\ref{CategoryA}). We define its \emph{refined typological schema of actants} to be the discrete category
\begin{equation}
\mathcal{A}_{D}^{\ast} := \mathbf{Disc}(R),
\end{equation}
where $R := (\mathrm{Ob}(\mathcal{A}) \setminus \{D\}) \cup \{S_1,S_2\}$, with $D$ denoting the type labelled `a subject' in $\mathrm{Ob}(\mathcal{A})$.
We introduce two new objects:
\begin{align}
&S_{1} \quad \text{(`a subject of doing')},\\
&S_{2} \quad \text{(`a subject of state')}.
\end{align}
such that the morphisms of $\mathcal{A}_{D}^{\ast}$ are only identities:
\begin{align}
\mathrm{Hom}_{\mathcal{A}_{D}^{\ast}}(u,u)&=\{\mathrm{id}_{u}\},\\
\mathrm{Hom}_{\mathcal{A}_{D}^{\ast}}(u,v)&=\varnothing, \quad \text{for } u \neq v.
\end{align}
\end{definition}

\begin{remark}
The construction given above retains the typological distinctions of the actantial model while refining the subject position to reflect the argument structure of narrative programs. A replacement of $D \in \mathrm{Ob}(\mathcal{A})$ by $S_{1}$ and $S_{2}$ is necessary because the original actantial schema does not distinguish between the \textit{subject of doing} and the \textit{subject of state}, whereas this distinction is fundamental to the internal structure of narrative programs. By introducing two distinct subject-types prior to collapse via the constant functor, we ensure that List-valued Kleisli representations retain the full argument structure of Greimas's original formulation.
\end{remark}

\begin{definition}[Actantial instance]\label{ActantialInstanceDef}\normalfont
Let $\mathcal{A}_{D}^{\ast}$ be the refined typological schema of actants and let $\Delta: \mathcal{A}_{D}^{\ast}\rightarrow \mathcal{A}^{\prime}$ be the \emph{constant functor} for which $\Delta_{a}(x)=a$ for every object $x \in \mathcal{A}_{D}^{\ast}$, and $\Delta_{a}(f)=\mathrm{id}_{a}$ for every morphism $f: x \rightarrow x^{\prime}$ in $\mathcal{A}_{D}^{\ast}$. We label the unique object $a$ in $\mathcal{A}^{\prime}$ as the type `an actant.' An \emph{actantial instance} relative to the refined typological reduction is a functor 
\[
I:\mathcal{A}^{\prime}\rightarrow \mathbf{Set},
\]
for which we write $X:=I(a)$ for the underlying set of role-indexed actants. Its elements are denoted $x_{r} \in X$, where $x$ is a discourse entity and $r$ indicates its actantial role.
\end{definition}

Using the actantial instance construction allows us to populate a database table that associates set-valued instances to the single object $a \in \mathcal{A}^{\prime}$ for the Aesop fable \textit{The Hare \& the Tortoise}. This yields:
\begin{equation}
\begin{tikzpicture}
\node at (0,0){
\begin{tabular}{@{} | l || @{}}
\hline
\multicolumn{1}{|c|}{\textbf{an actant}}\\ \hline
\textbf{ID}\\ \hline
race win$_{Object}$\\ \hline
challenge$_{Object}$\\ \hline
justification$_{Object}$\\ \hline
Tortoise$_{SubjectDoing}$\\ \hline
Tortoise$_{SubjectState}$\\ \hline
Hare$_{SubjectDoing}$\\ \hline
Hare$_{SubjectState}$\\ \hline
Tortoise$_{Sender}$\\ \hline
Tortoise$_{Receiver}$\\ \hline
consistency$_{Helper}$\\ \hline
perserverence$_{Helper}$\\ \hline
Fox$_{SubjectDoing}$\\ \hline
Fox$_{SubjectState}$\\ \hline
fairness$_{Helper}$\\ \hline
overconfidence$_{Opponent}$\\ \hline
underestimation$_{Opponent}$\\ \hline
laziness$_{Opponent}$\\ \hline
nap$_{Opponent}$\\ \hline
\end{tabular}
};
\end{tikzpicture}
\end{equation}
Given the schema $\mathcal{A}$ is a category that establishes relations between types through an \textit{actantial grammar} \citep[page 5]{GreimasCourtes1979} on roles, we can use its collapse in the object $a \in \mathrm{Ob}(\mathcal{A}^{\prime})$ as the basis for a new schema $\mathcal{N}$ that accounts for NP construction. In particular, we want to track the sign of the junction of an NP (i.e., $\cap$ or $\cup$) as well as the singleton, or list-formatted actants that associate to $I(\mathcal{A}^{\prime})$, which in turn form syntagmatic units that generate larger hypotactical clusters at the meso- and macro-scale of narrative. We firstly provide definitions after \citet{Spivak2012} of a \textit{monad}, \textit{Kleisli category} and $\top$-\textit{instance}, as these objects serve the underlying structure of our proposed schema. 

\begin{definition}[Monad]\label{MonadDef}\normalfont%\citep[page 6]{Spivak2012}
A \emph{monad}, denoted $\top$ on \textbf{Set} is the triple $\top:=(T, \eta, \mu)$ such that $T:\mathbf{Set}\rightarrow \mathbf{Set}$ is a functor, and $\eta: \textrm{id}_{\mathbf{Set}}\rightarrow T$ (the \emph{unit map}) and $\mu: T \circ T \rightarrow T$ (the \emph{multiplication map}) are natural transformation such that the following diagrams commute:
\begin{align}
\begin{tikzpicture}
\node at (0,0)(Tcirc){$T \circ \textrm{id}_{\mathbf{Set}}$};
\node at (3,0)(TT){$T \circ T$};
\node at (3,-2)(T){$T$};
\draw[->](Tcirc)--node[above,pos=0.5]{$\textrm{id}_{T}\circ \eta$}(TT);
\draw[->](TT)--node[right,pos=0.5]{$\mu$}(T);
\draw[double](T)--(Tcirc);
\end{tikzpicture}\\
\begin{tikzpicture}
\node at (0,0)(Tcirc){$\textrm{id}_{\mathbf{Set}\circ T}$};
\node at (3,0)(TT){$T \circ T$};
\node at (3,-2)(T){$T$};
\draw[->](Tcirc)--node[above,pos=0.5]{$\eta \circ \textrm{id}_{T}$}(TT);
\draw[->](TT)--node[right,pos=0.5]{$\mu$}(T);
\draw[double](T)--(Tcirc);
\end{tikzpicture}\\
\begin{tikzpicture}
\node at (0,0)(Tcirc){$T \circ T \circ T$};
\node at (3,0)(TT){$T \circ T$};
\node at (0,-2)(TafterT){$T \circ T$};
\node at (3,-2)(T){$T$};
\draw[->](Tcirc)--node[above,pos=0.5]{$\mu \circ \textrm{id}_{T}$}(TT);
\draw[->](TT)--node[right,pos=0.5]{$\mu$}(T);
\draw[->](TafterT)--node[below,pos=0.5]{$\mu$}(T);
\draw[->](Tcirc)--node[left,pos=0.5]{$\textrm{id}_{T}\circ \mu$}(TafterT);
\end{tikzpicture}
\end{align}
\end{definition}

\begin{definition}[Kleisli category]\label{KleisliCategoryDef}\normalfont
Let $\top = (T, \eta, \mu)$ be a monad on \textrm{Set} such that there exists a \emph{Kleisli category} associated to the monad, which we denote $\mathbf{Kls}(\top)$, and whose objects are the sets $\textrm{Ob}(\mathbf{Kls}(\top))=\textrm{Ob}(\mathbf{Set})$, and 
\begin{equation}
\mathrm{Hom}_{\mathbf{Kls}(\top)}(X,Y)=\mathrm{Hom}_{\mathbf{Set}}(X,T(Y)).
\end{equation}
Given $f:X \rightarrow T(Y)$ and $g:Y\rightarrow T(Z)$, their Kleisli composite is
\begin{equation}
g \circ f:=\mu_{Z} \circ T(g)\circ f:X\rightarrow T(Z).
\end{equation}
\end{definition}

\begin{definition}[Kleisli $\top$-instance]\label{T-instanceDef}\normalfont
Let $\mathcal{C}$ be a categorical schema and let $\top:=(T,\eta,\mu)$ be a monad on $\mathbf{Set}$ with $\mathbf{Kls}(\top)$ its associated Kleisli category. A \emph{Kleisli} $\top$-\textit{instance} on $\mathcal{C}$ is a functor in the form
\begin{equation}
\delta: \mathcal{C} \rightarrow \mathbf{Kls}(\top).
\end{equation}
\end{definition}

%FIG....................................................................................................................
\begin{figure}
\centering
\subfloat[Table of instances for $P \in \mathrm{Ob}(\mathcal{N})$\label{TableN}]{
\resizebox{\textwidth}{!}{
\begin{tikzpicture}
		\node at (0,0)(Object){
			\renewcommand{\arraystretch}{1.25}
			\begin{tabular}{@{} | l || >{\raggedright\arraybackslash}p{1.9cm} | p{1.5cm} | p{1.7cm} | >{\raggedright\arraybackslash}p{5cm} |}
			\hline
			\multicolumn{5}{|c|}{\textbf{a narrative program}}\\ \hline
			\textbf{ID}&\textbf{is discoursivization of}&\textbf{has junction type}&$\epsilon$(\textbf{depends on})&$\mathbf\delta(\textbf{actorializes})$\\ \hline
			NP1&Aesop226&$\cap$&$\ast$&[Tortoise$_{SubjectDoing}$, Tortoise$_{SubjectState}$, challenge$_{Object}$]\\ \hline
			NP2&Aesop226&$\cap$&$\ast$&[Hare$_{SubjectDoing}$, Hare$_{SubjectState}$, underestimation$_{Opponent}$]\\ \hline
			NP3&Aesop226&$\cap$&$\ast$&[Fox$_{SubjectDoing}$, Fox$_{SubjectState}$, fairness$_{Helper}$]\\ \hline
			NP4&Aesop226&$\cap$&NP2&[Hare$_{SubjectDoing}$, Hare$_{SubjectState}$, nap$_{Opponent}$, laziness$_{Opponent}$, overconfidence$_{Opponent}$]\\ \hline
			NP5&Aesop226&$\cap$&NP7&[Tortoise$_{SubjectDoing}$, Tortoise$_{SubjectState}$, race win$_{Object}$, justification$_{Object}$]\\ \hline
			NP6&Aesop226&$\cup$&$\ast$&[Hare$_{SubjectDoing}$, Hare$_{SubjectState}$, race win$_{Object}$]\\ \hline
			NP7&Aesop226&$\cup$&$\ast$&[Hare$_{SubjectDoing}$, Hare$_{SubjectState}$, consistency$_{Helper}$, perserverence$_{Helper}$]\\ \hline
			\end{tabular}
			};
\end{tikzpicture}
}
}

\subfloat[$\mathrm{Olog}(\mathcal{N})$\label{OlogN}]{
\begin{tikzpicture}[A-node/.style={draw,rectangle,label=70:#1},node distance=22mm,inner sep=5]
\node[A-node={$P$}] at (0,0)(P){a narrative program};
\node[A-node={$a$}] at (5.5,0)(A'){an actant};
\node[A-node={$M$}] at (0,-2.5)(M){a mode};
\node[A-node={$S$}] at (0,2.5)(S){a source text};
\draw[->](P)--node[above]{$\delta$(actorializes)}(A');
\draw[->](P)--node[left]{has junction type}(M);
\draw[<-](S)--node[left]{is discoursivization of}(P);
%\path[->] (P) edge [loop above] node {depends on} ();
\path[->,draw] (P.173) to [out=180, in=180, looseness=7] node [left]{$\epsilon$(depends on)} (P.185);
\end{tikzpicture}
}
\caption{Database table of instances of the object $P \in \mathrm{Ob}(\mathcal{N})$ and olog of the categorical schema $\mathcal{N}$.}
\label{CatN+OlogN}
\end{figure}
%.....................................................................................................................

Consider the categorical schema $\mathrm{Olog}(\mathcal{N})$ we provide in Figure~\ref{OlogN} that minimally describes the relations between elements found within Greimas's formulation of a NP we gave in Equation~(\ref{NPcap})--(\ref{NPcup}). In the underlying categorical schema $\mathcal{N}$ we utilise the List ($\delta$) and Maybe ($\epsilon$) monads to assign List- and Maybe-valued semantics to the morphisms `actorializes' and `depends on', in the table of $P \in \mathcal{N}$. We retain $\mathcal{N}$ as an ordinary categorical schema, but interpret the aspect (morphism) `actorializes': $P\rightarrow a$ by a List-valued map
\begin{equation}
\delta(\mathrm{actorializes}):I(P)\rightarrow \mathrm{List}(I(a)).
\end{equation}
Thus, `actorializes' is interpreted as a Kleisli arrow in the category $\mathbf{Kls}(\mathrm{List})$, while `depends on' is interpreted as a Kleisli arrow in the category $\mathbf{Kls}(\mathrm{Maybe})$. 

From $\mathrm{List}(I(a))$ we obtain the collection $[x_{1},x_{2}, \dots, x_{n}]$ of role-indexed actants from $I(a)$ that are actorializations of a given NP. The aspect `depends on': $P\rightarrow P$ is defined by a Maybe-value map
\begin{equation}
\epsilon(\mathrm{depends \,\,on}):I(P)\rightarrow \mathrm{Maybe}(I(P)), 
\end{equation}
where $\mathrm{Maybe}(X):=X \cup \{\ast\}$ so that a database entry `$\ast$' denotes the absence of a dependency. Thus for each narrative program $p \in I(P)$, either $\epsilon(\mathrm{depends\,\,on})(p)=p^{\prime}$ for some $p^{\prime}\in P$, or $\epsilon(\mathrm{depends\,\,on})(p)=\ast$, indicating that $p$ is not derived from another NP. We note here that these interpretations involve distinct monads on \textbf{Set}, and hence do not arise from a single Kleisli instance but rather from assigning monadic semantics to individual morphisms. A narrative program can thus be represented minimally as a list-valued Kleisli entry whose remaining elements are actants participating in the junction. Conjunctive or disjunctive modes are carried as row-level metadata rather than encoded internally in the list.

\begin{definition}[List-valued narrative program]\label{LVNPDef}\normalfont
Let $\mathcal{N}$ be the schema of narrative programs such that
\begin{equation}
\delta(\mathrm{actorializes}) : I(P) \to \mathrm{List}(X)
\end{equation}
is the Kleisli interpretation of the morphism labelled as the aspect actorializes: $P \rightarrow a$. A \emph{list-valued narrative program} $p \in I(P)$ is represented by
\begin{equation}
\delta(\mathrm{actorializes})(p)=[x_{1},x_{2}, \dots x_{n}]\in \mathrm{List}(X),
\end{equation}
which we interpret as having the canonical form:
\[
[S_{1},S_{2},X_{1}, \dots, X_{k}]
\]
where $S_{1} \in X$ is the type `a subject of doing,' $S_{2} \in X$ is the type `a subject of state,' and $X_{1}, \dots, X_{k} \in X$ are actants participating in the junction.
\end{definition}

\begin{remark}\label{ObjectRemark}
Although Greimas formulates narrative programs in terms of a relation between a subject and an object in the conjunctive mode as $(S_{2} \cap O)$, and in the disjunctive mode as $(S_{2} \cup O)$, the term \emph{Object} in this context does not correspond strictly to the type $B \in \mathrm{Ob}(\mathcal{A})$ of the actantial model (see Figure~\ref{ActantialModel}). Rather, it denotes a syntactic position, and therefore denotes the entity with which the subject of state is brought into a junction. Due to actantial syncretism, \citep[page 142]{Hebert2020} a given actant may occupy multiple roles within a narrative, or shift roles over time. Consequently, the elements $X_{1}, \dots, X_{k}$ in the list-valued representation of a narrative program are not restricted to instances of the actantial type `Object,' but may include actants of any type (e.g. Helper, Opponent, or Subject). For this reason, we interpret a list-valued narrative program in the form 
\[
[S_1, S_2, X_1, \dots, X_k],
\]
where each $X_i \in X$ is an actant participating in the junction, rather than enforcing a strict typing constraint on the `object' position.
\end{remark}

\begin{definition}[Dependent narrative program]\label{DNPDef}\normalfont
Let $\mathcal{N}$ be the schema of narrative programs and let
\[
\epsilon(\mathrm{depends \,\, on}) : I(P) \to \mathrm{Maybe}(I(P))
\]
be the interpretation of the morphism $\mathrm{dependsOn}:P \to P$. A narrative program $q \in I(P)$ is said to be:
\begin{itemize}
\item \emph{basic} if $\epsilon(\mathrm{dependsOn})(p) = \ast$,
\item \emph{dependent} if $\epsilon(\mathrm{depends \,\,on})(p) = q$ for some $q \in I(P)$.
\end{itemize}
In the dependent case, $p$ is interpreted as extending or composing with the narrative program $q$.
\end{definition}
\begin{remark}
The dependency relation induces a recursive interpretation of narrative programs, whereby the semantics of a dependent program $p$ is obtained by composing its associated transformation with that of the program 
\[
q = \epsilon(\mathrm{depends \,\,on})(p),
\]
when defined. Thus, while a list-valued narrative program records the local actantial configuration of a transformation, the dependency relation provides a means of constructing more complex narrative trajectories through composition.
\end{remark}

If we return to the Aesop fable of \textit{The Hare \& the Tortoise} we can see how the list-valued narrative programs NP1--NP7 compiled in Figure~\ref{TableN} describe the dynamics of the narrative as it unfolds over its four paragraphs. The story begins at timestamp $T_{0}$ where we have the following conjunctive NPs which we parse from the database table of $P \in \mathrm{Ob}(\mathcal{N})$ as:
\begin{align}
NP_{1}:&=\{\textrm{Tortoise} \rightarrow (\textrm{Tortoise} \cap \textrm{challenge})\},\label{NP1Eq}\\
NP_{2}:&=\{\textrm{Hare} \rightarrow (\textrm{Hare} \cap \textrm{underestimation})\}.
\end{align}
These NPs describe the initial challenge of the Tortoise to a race, that itself is based on an underestimation by the Hare of the Tortoise. Here we see what \citet[page 9]{GreimasCourtes1979} identify as the ternary articulation of the 3-tuple (\textit{virtual, actual, realized}), and how it is driven by the mechanism of syncretism. That is to say that the actant `underestimation,' when abstractly considered as a syntagmatic unit is virtualised through the category of an Object, yet when actualised in an NP through a conjunction, may become an Opponent as in $NP_2$. 

In the second paragraph of the story that we timestamp as $T_{1}$, we have the following conjunctive narrative program
\begin{equation}
NP_{3}:=\{\textrm{Fox} \rightarrow (\textrm{Fox} \cap \textrm{fairness})\},
\end{equation}
again with the notion that the actant `fairness' may be virtualised as an Object, but also actualised as a Helper. Then, in the third paragraph at $T_{2}$ we have the following dependent NP which accounts for the modality of \textit{causing-to-be}:
\begin{equation}
NP_{4}:=\{NP_{2}\,\{\textrm{Hare} \rightarrow (\textrm{Hare} \cap \{\textrm{nap, laziness, overconfidence}\})\}\}
\end{equation}
Here we see how the categorical schema $\mathcal{A}$ of Greimas's actantial model provides a typing system, which we use in $\mathcal{N}$ to define a relational structure over those types to produce NPs. That is to say that what  Greimas and Court\'{e}s call \textit{factitiveness}, or how the structure of the factitive modality of causing-to-do or causing-to-be suggests a ``contractual communication (involving the transmission of the modal investment) between two subjects, each of which is endowed with its own narrative trajectory.'' \citep[page 115]{GreimasCourtes1979} This hypotactic relation between Subjects (possibly self-reflective as found at $T_{2}$) is captured through $\epsilon(\mathrm{depends \,\, on})$ in which the semantics of NP4 is not treated as a flat list union, but the composition of the process denoted by NP2 with the process denoted by NP4.

Finally in the last paragraph of the story at $T_{3}$ we have the following narrative programs:
\begin{align}
NP_{5}:&=\{NP_{7}\,\{\textrm{Tortoise} \rightarrow (\textrm{Tortoise} \cap \{\textrm{race win, justification}\})\}\},\\
NP_{6}:&=\{\textrm{Hare} \rightarrow (\textrm{Hare} \cup \textrm{race win})\},\\
NP_{7}:&=\{\textrm{Hare} \rightarrow (\textrm{Hare} \cup \{\textrm{consistency, perserverence}\})\}.\label{NP7Eq}
\end{align}
We interpret the victory of the Tortoise as a dependent NP of the composition: $NP_{5} \circ NP_{7}$. That is to say that the disjunction between the Hare and the actualised Helper actants of `consistency' and `perseverance' are the cause for the realisation of the conjunction between the Tortoise and the Objects of `race win,' and `justification.'

\begin{remark}\label{FGNPRem}%[Faithfulness to Greimasian narrative programs]
The list-valued representation of narrative programs introduced in Definition~\ref{LVNPDef} faithfully captures the structural components of Greimas’s formulation $F\{S_1 \to (S_2 \cap O)\}$ and $F\{S_1 \to (S_2 \cup O)\}$ in the following sense:
\begin{itemize}
\item The distinction between `Subject of doing' and `Subject of state' is preserved explicitly through the first two elements $S_1$ and $S_2$.
\item The junction relation between $S_2$ and associated actants is represented by the list structure $[X_1,\dots,X_k]$, together with the junction type $(\cap$ or $\cup)$ encoded at the schema level.
\item Recursive structure, corresponding to Greimas's use of nested programs via the operator $F$, is captured by the dependency morphism 
$\epsilon(\mathrm{depends \,\,on})$, whose semantics is given by composition (Definition~\ref{DNPDef} and Remark~\ref{ObjectRemark}).
\end{itemize}
Thus, the categorical formulation preserves the essential syntactic and compositional features of Greimasian narrative programs while providing a precise mathematical semantics.
\end{remark}

\section{Diagramming discoursivization}

Although Greimas and others explored the idea of the narrativity in relation to pragmatics, modal analysis and the passions \citep{PerronCollins1989}, the Paris school of structuralist semiotics never formalised a descriptive diagrammatic approach to the theory of how NPs manifest and drive \textit{discoursivization}. Here, we mean what \citet[page 86-86]{GreimasCourtes1979} identify as the procedures of setting a NP into a discourse, and how it must necessarily bridge a gap between ``narrative syntax and narrative semantics.'' In previous work we introduced the use of wiring diagrams in symmetric monoidal categories for narrative analysis that was motivated by a top-down framework for capturing the hypotactical clusters of discoursivization. 

In the second part of this article we take a bottom-up path and directly utilise our established categorical schema $\mathrm{Olog}(\mathcal{N})$ (Figure~\ref{OlogN}) together with our List-valued narrative program constructions (Definition~\ref{LVNPDef}, Definition~\ref{DNPDef}) as the basis for generating a composed wiring diagram of \textit{The Hare \& the Tortoise} that is a function of a \textit{narrative program generator}. We begin by utilising the role-indexed types of the collapsed category $\mathcal{A}^{\prime}$ of the Greimas model as the basis for a \textit{free symmetric monoidal category}.

\begin{definition}[Discrete category of actantial types]\label{DiscreteCategoryDef}\normalfont
Given an actantial instance $I: \mathcal{A}^{\prime}\rightarrow \mathbf{Set}$ with the undelying set $X$, we denote $\mathbf{Disc}(X)$ as the \emph{discrete category} on $X$ whose objects are the element of $X$, and whose only morphisms are the identity morphisms. We interpret $\mathbf{Disc}(X)$ as the category of \emph{actantial types}.
\end{definition}

Deriving the new category $\mathbf{Disc}(X)$ is an analog what Greimas and Court\'{e}s call \textit{actorialization} or the establishing of ``the actors of the discourse by uniting different elements of the semantic and syntactic components.'' \citep[page 7]{GreimasCourtes1979} Here, we directly source the instances (actors) found in the Aesop fable, and establish them as types (objects) in $\mathbf{Disc}(X)$. But in order to allow for the unique generation of NPs from this base, we require the ability for combinatorial possibilities between actants (objects), which we can accommodate through applying a symmetric monoidal structure. 

\begin{definition}[Symmetric monoidal category]\normalfont
A \emph{symmetric monoidal category} is a category $\mathcal{C}$ equipped with:
\begin{itemize}
\item a bifunctor $\otimes : \mathcal{C} \times \mathcal{C} \to \mathcal{C}$, called the \emph{tensor product},
\item a distinguished object $I \in \mathrm{Ob}(\mathcal{C})$, called the \emph{unit object},
\item natural isomorphisms:
\begin{align*}
(X \otimes Y) \otimes Z &\cong X \otimes (Y \otimes Z) && \text{(associativity)}\\
I \otimes X &\cong X \cong X \otimes I && \text{(unit laws)}\\
X \otimes Y &\cong Y \otimes X && \text{(symmetry)}
\end{align*}
\end{itemize}
These isomorphisms satisfy standard coherence conditions (associativity, unit, and symmetry compatibility), ensuring that tensoring is well-behaved up to canonical isomorphism.
\end{definition}

\begin{remark}
Intuitively, the tensor product $\otimes$ represents the co-presence of actants within a narrative configuration, while the symmetry ensures that their ordering is not semantically significant. This becomes relevant at the level of \emph{figurativization} \citep[page 118]{GreimasCourtes1979} or how a \emph{narrative trajectory (NT)} becomes generated firstly through identifying actant subsets or collections that become figurativized through conjunctive $(\cap)$ or disjunctive $(\cup)$ relations.
\end{remark}

\begin{proposition}[Free symmetric monoidal category]\normalfont
Let $I: \mathcal{A}^{\prime}\rightarrow \mathbf{Set}$ be the actantial instance with underlying set $X$. Then there exists a free symmetric monoidal category
\begin{equation}
\mathbf{FSM}(\mathbf{Disc}(X))
\end{equation}
generated by $\mathbf{Disc}(X)$. Its objects are finite tensor products of elements of $X$, including the unit  object $I$, and its morphisms are generated by identities, symmetries, and composition. Moreover, for any symmetric monoidal category $\mathcal{C}$ and any function
\[
f: X \rightarrow \mathrm{Ob}(\mathcal{C}),
\]
there exists a unique (up to isomorphism) strong symmetric monoidal functor
\begin{equation}
\tilde{f}: \mathbf{FSM}(\mathbf{Disc}(X)) \rightarrow \mathcal{C}
\end{equation}
such that $\tilde{f}$ extends $f$.
\end{proposition}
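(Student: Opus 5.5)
The plan is to build $\mathbf{FSM}(\mathbf{Disc}(X))$ explicitly as the permutation-category (word) model, and then check the universal property by hand using coherence. Since $\mathbf{Disc}(X)$ has only identity morphisms, the free symmetric monoidal category on it coincides with the free symmetric monoidal category on the set $X$.

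\emph{Construction.} Take the strict model. Objects are finite words $x_1x_2\cdots x_n$ in $X$, with the empty word playing the role of the unit $I$. A morphism $x_1\cdots x_n\to y_1\cdots y_m$ exists only when $n=m$, and is then a permutation $\sigma\in S_n$ with $y_{\sigma(i)}=x_i$ for all $i$. Composition is composition of permutations, and identities are identity permutations. The tensor product is concatenation of words on objects and block sum $\sigma\oplus\tau$ on morphisms. The symmetry $X\otimes Y\to Y\otimes X$ is the block transposition, and the associators and unitors are identities. The symmetric monoidal axioms then reduce to elementary identities in the symmetric groups: functoriality of $\oplus$, naturality of the block transposition, the hexagon, and $s^2=\mathrm{id}$. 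By construction every morphism is generated from identities and elementary transpositions $x\otimes y\to y\otimes x$ using $\circ$ and $\otimes$, since $S_n$ is generated by adjacent transpositions. This establishes the description of objects and morphisms in the statement.

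\emph{Existence of $\tilde f$.} Given a symmetric monoidal category $\mathcal{C}$ and $f:X\to\mathrm{Ob}(\mathcal{C})$, first replace $\mathcal{C}$ by a strict symmetric monoidal category using Mac Lane's strictification theorem; equivalently, fix a bracketing convention such as left-nested. Define $\tilde f(x_1\cdots x_n)=f(x_1)\otimes\cdots\otimes f(x_n)$, with the empty word sent to $I_{\mathcal{C}}$. On morphisms, write $\sigma$ as a product of adjacent transpositions and send each transposition to the corresponding symmetry $\mathrm{id}\otimes s\otimes\mathrm{id}$. The strong monoidal structure maps are the canonical associativity and unit isomorphisms.

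\emph{Main obstacle: well-definedness.} The delicate step is showing that this assignment does not depend on how $\sigma$ is factored into transpositions. I would handle it with Mac Lane's coherence theorem for symmetric monoidal categories: any two composites of symmetry and associativity instances between the same words with the same underlying permutation are equal. Concretely, one checks the Coxeter relations of $S_n$, namely $s_i^2=1$, $s_is_j=s_js_i$ for $|i-j|>1$, and the braid relation $s_is_{i+1}s_i=s_{i+1}s_is_{i+1}$. These follow respectively from $s\circ s=\mathrm{id}$, from functoriality of $\otimes$, and from the hexagon axiom together with naturality. Once well-definedness holds, functoriality and compatibility with $\otimes$ and the symmetry are immediate from the definition.

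\emph{Uniqueness.} Let $G$ be any strong symmetric monoidal functor extending $f$. Its values on words are forced up to the structure isomorphisms of $G$. Its values on generators are also forced, because $G$ preserves the symmetry, and every morphism is built from generators. The structure isomorphisms of $G$ therefore assemble into a monoidal natural isomorphism $G\cong\tilde f$. This is exactly the meaning of ``unique up to isomorphism'' in the statement, and it completes the argument.
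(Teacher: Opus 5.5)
The paper gives no proof of this proposition. It is asserted as a standard fact, and the only justification offered is the later remark that, since $\mathbf{Disc}(X)$ is discrete, $\mathbf{FSM}(\mathbf{Disc}(X))$ is the free symmetric monoidal category on the set $X$. Your proposal supplies a correct proof along the classical route: the permutation (word) model, well-definedness of $\tilde f$ via the Coxeter presentation of $S_n$, and uniqueness by transporting along the structure isomorphisms of $G$.

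Two places deserve tightening. First, citing Mac Lane's symmetric coherence theorem for well-definedness essentially cites the proposition itself, since the two are standard reformulations of each other. It is your Coxeter check that makes the argument self-contained. That check does cover words with repeated letters, where hom-sets are proper subsets of $S_n$. The reason is that the category is the action groupoid of $S_n$ on $X^n$, presented by the generators $s_i$ at each word with the Coxeter relations imposed at each word, and these relations hold in $\mathcal{C}$ at arbitrary objects. Second, compatibility of $\tilde f$ with the symmetry is not immediate from the definition. After strictification you must show that $\sigma_{\tilde f(u),\tilde f(v)}$ is the image of some factorization of the block transposition into adjacent ones. This follows by induction from the hexagon identities $\sigma_{A\otimes B,C}=(\sigma_{A,C}\otimes\mathrm{id})(\mathrm{id}\otimes\sigma_{B,C})$ and $\sigma_{A,B\otimes C}=(\mathrm{id}\otimes\sigma_{A,C})(\sigma_{A,B}\otimes\mathrm{id})$, together with $\sigma_{I,A}=\mathrm{id}_A$.

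What your explicit route buys over the bare assertion is a concrete description of the hom-sets, and this exposes two things the statement blurs. First, the morphisms are generated from identities and symmetries under composition \emph{and} $\otimes$, which is what you actually prove. Composition alone does not suffice: for distinct $a,b,c\in X$, composites of symmetry instances $\sigma_{u,v}: uv\to vu$ starting at $abc$ reach only the rotations $bca$ and $cab$, never $acb$. Second, every morphism is an invertible permutation between words of equal length, so $\mathrm{Hom}(S_1, S_2\otimes X_1\otimes\cdots\otimes X_k)$ is empty whenever $k\geq 1$. The narrative program generators introduced later in the paper therefore cannot be morphisms of $\mathbf{FSM}(\mathbf{Disc}(X))$ and would have to be freely adjoined as additional generators.
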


%FIG....................................................................................................................
\begin{figure}
\subfloat[$\mathcal{W}_{1}(\cap_{\mathrm{NP2}},\textrm{causing-to-be};\cap_{\mathrm{NP4}})$.\label{WD-NP4}]{
\resizebox{\textwidth}{!}{
\begin{tikzpicture}[font=\Large,yscale=1.35]
\node[rounded corners=3,draw,minimum size=15mm,inner sep=5] at (0,0) (NP2){$\cap_{\mathrm{NP2}}$};
\node[rounded corners=3,draw,minimum size=15mm,inner sep=5] at (9,0) (ctb){causing-to-be};
\draw(NP2.30)--(NP2.30 -| ctb.west)node[above,pos=0.5]{Hare$_{SubjectState}$};
\draw(NP2.-30)--(NP2.-30 -| ctb.west)node[above,pos=0.5]{underestimation$_{Opponent}$};
\draw(NP2.west)--++(180:10mm)node[left](HareSD){Hare$_{SubjectDoing}$};
\draw(ctb.15)--++(0:10mm)node[right]{{Hare$_{SubjectState}$}};
\draw(ctb.-15) --++(0:10mm)node[right,label={[align=left,yshift=-15,label distance=-5]0:nap$_{Opponent}$$\otimes$\\laziness$_{Opponent}$$\otimes$\\overconfidence$_{Opponent}$}](napOBJ){};
%box
\draw[thick](HareSD.east) [rounded corners=3]--++ (90:12mm)node[above,xshift=180]{$\tau: \left(\bigcap_{\textrm{NP2}},\textrm{causing-to-be}\right)\rightarrow \cap_{\textrm{NP4}}$} -| (napOBJ.west) --++(-90:8mm) -| cycle;
\end{tikzpicture}
}
}

\subfloat[$\mathcal{W}_{2}(\cup_{\mathrm{NP7}},\textrm{causing-to-do};\cap_{\mathrm{NP5}})$.\label{WD-NP5}]{
\resizebox{0.96\textwidth}{!}{
\begin{tikzpicture}[font=\Large,yscale=1.35]
\node[rounded corners=3,draw,minimum size=15mm,inner sep=5] at (0,0) (NP2){$\cup_{\mathrm{NP7}}$};
\node[rounded corners=3,draw,minimum size=20mm,inner sep=5] at (9,0) (ctb){causing-to-do};
\draw(NP2.30)--(NP2.30 -| ctb.west)node[above,pos=0.5]{Hare$_{SubjectState}$};
\draw(NP2.-30)--(NP2.-30 -| ctb.west)node[below,pos=0.5,align=left]{consistency$_{Helper}$$\otimes$\\perseverance$_{Helper}$};
\draw(NP2.west)--++(180:10mm)node[left](HareSD){Hare$_{SubjectDoing}$};
\draw(ctb.25)--++(0:10mm)node[right]{{Tortoise$_{SubjectState}$}};
\draw(ctb.0) --++(0:10mm)node[right,align=left](napOBJ){justification$_{Object}$};
\draw(ctb.-25) --++(0:10mm)node[right,align=left](napOBJ){race win$_{Object}$};
%box
\draw[thick](HareSD.east) [rounded corners=3]--++ (90:12mm)node[above,xshift=180]{$\tau: \left(\cup_{\textrm{NP7}},\textrm{causing-to-do}\right)\rightarrow \bigcap_{\textrm{NP5}}$} -| (napOBJ.west) --++(-90:8mm) -| cycle;
\end{tikzpicture}
}
}
\caption{Wiring diagrams of the narrative programs NP4 and NP5 (cf. Table~\ref{TableN}) from \textit{The Hare \& the Tortoise}.}
\label{WiringDiagrams2}
\end{figure}
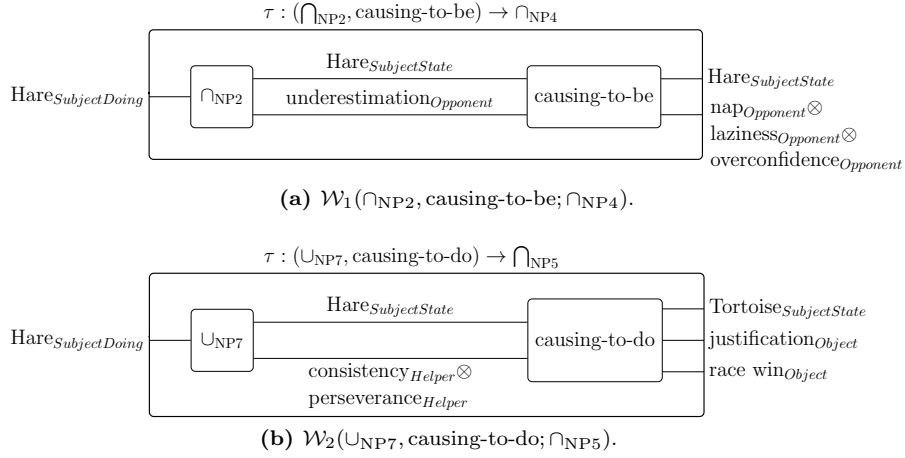
%.....................................................................................................................

Since $\mathbf{Disc}(X)$ contains only identity morphisms, $\mathbf{FSM}(\mathbf{Disc}(X))$ is equivalently the free symmetric monoidal 
category generated by the set $X$. But we want to visualise these types in terms of their actantial associations (figurativization) within a NP, both as inputs (domain side) that are virtualised/actualised in addition to describing them as outputs (codomain side) that produce realisations according to a modality or junction. To address this need we introduce a \textit{narrative program generator} as a non-identity morphism in $\mathbf{FSM}(\mathbf{Disc}(X))$.

\begin{definition}[Narrative program generator]\normalfont
A \emph{narrative program generator} $\llbracket p \rrbracket$ is a morphism in $\mathbf{FSM}(\mathbf{Disc}(X))$ of the form
\begin{equation}
[S_1, S_2, X_1, \dots, X_k] : S_1 \to S_2 \otimes X_1 \otimes \cdots \otimes X_k
\end{equation}
together with a label indicating its conjunctive or disjunctive mode.
\end{definition}

In order to visualise the dynamics of actorialization through the operations of engagement (conjunction) and disengagement (disjunction), we turn here to wiring diagrams in order to provide a visual `map' of the structure of the elements of a NP within a source text.   

\begin{definition}[Wiring diagram]\normalfont
Let $X$ be the set of role-indexed actants obtained from an actantial instance $I : \mathcal{A}^{\prime} \to \mathbf{Set}$, and let $\mathbf{FSM}(\mathbf{Disc}(X))$ be the free symmetric monoidal category generated by $X$. A \emph{wiring diagram} $\mathcal{W}$ is a morphism in 
$\mathbf{FSM}(\mathbf{Disc}(X))$ constructed from:
\begin{itemize}
\item identity morphisms,
\item symmetry isomorphisms,
\item tensor products,
\item composition,
\item and a specified collection of generating morphisms corresponding to 
narrative programs.
\end{itemize}
Graphically, a wiring diagram consists of boxes (representing generators) connected by wires (representing actants), where wires may be rearranged using symmetry and composed sequentially to form larger diagrams.
\end{definition}

We interpret a narrative program generator as a box with one input wire labelled by $S_1$ and multiple output wires labelled by $S_2, X_1, \dots, X_k$ (i.e., an actorialization). Objects of $\mathbf{Disc}(X)$ are interpreted as wire types in a diagrammatic language, and tensor products represent the simultaneous presence of multiple actants. The category $\mathbf{FSM}(\mathbf{Disc}(X))$ provides the typological basis for wiring-diagram representations of narrative. Objects are finite tensor products of role-indexed actants, representing co-presence of actantial elements (figurativization). We give examples of wiring diagrams in Figure~\ref{WiringDiagrams2} that describe what we previously identified in \textit{The Hare \& the Tortoise} as the NPs:
\begin{align*}
NP_{4}:&=\{NP_{2}\,\{\textrm{Hare} \rightarrow (\textrm{Hare} \cap \{\textrm{nap, laziness, overconfidence}\})\}\},\\
NP_{5}:&=\{NP_{7}\,\{\textrm{Tortoise} \rightarrow (\textrm{Tortoise} \cap \{\textrm{race win, justification}\})\}\}.
\end{align*}
Both these NPs correspond to dependent narrative programs, and their dependencies of $NP_2$ and $NP_7$ are shown in the diagrams in Figure~\ref{WiringDiagrams2} through the labelled morphisms (boxes) $\cap_{\mathrm{NP2}}$ and $\cup_{\mathrm{NP7}}$. We follow the convention of \citet{RupelSpivak2013} and name the diagrams in long form as $\mathcal{W}_{n}(Y;Z)$, where $Y$ is the domain as a finite set of boxes (morphisms), and $Z$ is the codomain, or the `exterior' containing box. Each exterior box is also labelled by a function in the form $\phi: Y_{1}, \dots, Y_{n}\rightarrow Z$ that acts as a rule that describes the flow of information in terms of `supply' and `demand.'  We also extend the collection of non-identity morphisms in $\mathbf{FSM}(\mathbf{Disc}(X))$ so as to strengthen the factitive transmission between two utterances of doing.   

\begin{definition}[Factitive morphisms]\normalfont
We introduce two distinguished morphisms in $\mathbf{FSM}(\mathbf{Disc}(X))$:
\begin{align}
\mathrm{ctb} \quad (\text{causing-to-be}), \qquad
\mathrm{ctd} \quad (\text{causing-to-do}),
\end{align}
which correspond to the factitive-modality-NP-construction described by \citet[page 161]{Hebert2020}. These morphisms act on narrative program generators by mediating their composition. Given a narrative program generator 
\begin{equation}
\llbracket p \rrbracket : S_1 \to S_2 \otimes X_1 \otimes \cdots \otimes X_k,
\end{equation}
a dependent narrative program is obtained by composing $\llbracket p \rrbracket$ with a factitive morphism, yielding a new generator
\begin{equation}
\tau : (\llbracket p \rrbracket, \mathrm{ctb}) \longrightarrow \llbracket p \rrbracket^{\prime},
\quad \text{or} \quad
\tau : (\llbracket p \rrbracket, \mathrm{ctd}) \longrightarrow \llbracket p \rrbracket^{\prime},
\end{equation}
where $\llbracket p \rrbracket^{\prime}$ denotes the resulting narrative program.
\end{definition}

These morphisms provide a categorical realization of the nesting operator $F$ in Greimas’s formulation of narrative programs (cf. Equation~(\ref{NPcap})--(\ref{NPcup})). Rather than representing nested programs as lists of lists, we interpret them compositionally: a dependent narrative program is obtained by composing the transformation associated to one program with that of another. The modalities of `causing-to-be' and `causing-to-do' specify the manner in which this composition occurs, in accordance with the semiotic interpretation of factitivity. In diagrammatic terms, these factitive morphisms correspond to the insertion of one wiring diagram into another, as illustrated in Figure~\ref{WiringDiagrams2}. This realizes dependency as substitution of boxes within a larger diagram.

\begin{definition}[Composition via substitution]\label{SubstitutionDef}\normalfont
Let $p, q \in I(P)$ be narrative programs such that $\epsilon(\mathrm{dependsOn})(p) = q$. Let
\begin{equation}
\llbracket p \rrbracket, \llbracket q \rrbracket \in \mathbf{FSM}(\mathbf{Disc}(X))
\end{equation}
denote the corresponding narrative program generators. We define the composition of $\llbracket p \rrbracket$ with $\llbracket q \rrbracket$ as the morphism
\begin{equation}
\llbracket p \rrbracket \circ \llbracket q \rrbracket
\end{equation}
in $\mathbf{FSM}(\mathbf{Disc}(X))$, obtained by substituting the wiring diagram of $\llbracket q \rrbracket$ into the input of $\llbracket p \rrbracket$, whenever the types are compatible.
\end{definition}

In diagrammatic terms, this corresponds to connecting the output wires of  $\llbracket q \rrbracket$ to the input wire of $\llbracket p \rrbracket$, yielding a composite transformation. Thus, narrative programs generate morphisms in a symmetric monoidal category, and narrative structure emerges through their composition via wiring diagrams.

\subsection{Narrative trajectories in a hypergraph category}

Our introduction of morphisms in $\mathbf{FSM}(\mathbf{Disc}(X))$ was motivated by a desire to encode narrative transformations (i.e. through narrative programs) as passages that develop from the static actantial configurations of figurativization to the dynamic narrative processes of actorialization. The passage from $I(a) \in \mathrm{Ob}(\mathcal{N})$ to $\mathbf{FSM}(\mathbf{Disc}(I(a)))$ reinterprets instance-level actants as generating types, enabling their subsequent use as wires in a diagram of the symmetric monoidal semantics of a narrative. 

Our final move is from NPs as local descriptions of the actantial dynamics of syntactic relations expressed through $\llbracket p \rrbracket$ and visualised via $\mathcal{W}$, to what Greimas and Court\'{e}s call a \textit{narrative trajectory} (NT), or ``a hypotactic series of either simple or complex narrative programs . . .  [as] a logical chain in which each NP is presupposed by another, presupposing, NP.'' \citep[page 207]{GreimasCourtes1979}  As simple syntactic units, NPs dictate the ``narrative progress'' of the discourse as well as the dynamics of role-syncretism and the hierarchy of nesting and dependencies. As a collection of interdependent NPs, an NT reinforces the notion that an actant is not a fixed entity but a function of actantial relations that are situated both in the pragmatic and cognitive dimension across the duration of a narrative.  In order to fully explore this within the context of wiring diagrams, we introduce the utility of a \textit{Frobenius structure} and \textit{hypergraph categories}.

%FIG....................................................................................................................
\begin{figure}
\centering
\resizebox{\textwidth}{!}{
\begin{tikzpicture}[font=\Large,yscale=1.35]
\node[rounded corners=3,draw,minimum size=15mm,inner sep=5] at (5,2.5) (NP1){$\cap_{\mathrm{NP1}}$};
\node[rounded corners=3,draw,minimum size=15mm,inner sep=5] at (5,0) (NP5){$\cap_{\mathrm{NP5}}$};
\node[rounded corners=3,draw,minimum size=15mm,inner sep=5] at (5,-3) (NP4){$\cap_{\mathrm{NP4}}$};
\node[rounded corners=3,draw,minimum size=15mm,inner sep=5] at (5,-5) (NP3){$\cap_{\mathrm{NP3}}$};
\node[label={180:Tortoise$_{SubjectDoing}$},left=30mm of NP1]  (TortoiseSD){};
\draw(TortoiseSD.east)--++(0:8mm) coordinate(join1) to [out=0,in=180] (NP1.west);
\node[shape=circle,draw,fill=black,draw,minimum size=1mm,inner sep=0,left=25mm of NP5] at ($(NP5)!0.5!(NP4)$) (join1a){}; 
\draw(join1a) -- (join1a -| TortoiseSD.east)node[label={180:Hare$_{SubjectDoing}$}] (HareSD){};
\draw(join1a) to [out=0,in=180] (NP5.west);
\draw(join1a) to [out=0,in=180] (NP4.west);
\draw(NP3.west) -- (NP3.west -| TortoiseSD.east)node[label={180:Fox$_{SubjectDoing}$}] (FoxSD) (FoxSD){};
\node[rounded corners=3,draw,minimum size=15mm,inner sep=5] at (9,-2) (NP6){$\cup_{\mathrm{NP6}}$};
\draw(NP1.20) --++ (0:80mm)node[right](challengeOBJ){challenge$_{Object}$};
\node[shape=circle,draw,fill=black,draw,minimum size=1mm,inner sep=0,right=25mm of NP5.east] at ($(NP1)!0.5!(NP5)$ |- join1) (join2){}; 
\draw(NP1.-20) to [out=0,in=180] (join2);
\draw(NP5.30) to [out=0,in=180] (join2);
\draw(join2)--(join2 -| challengeOBJ.west)node[right](TortoiseSS){Tortoise$_{SubjectState}$};
\draw(NP5)--(NP5 -| challengeOBJ.west)node[right](justificationOBJ){justification$_{Object}$};
\draw(NP5.-30)--++(0:40mm) coordinate (p1);
\node[shape=circle,draw,fill=black,draw,minimum size=1mm,inner sep=0,right=20mm of p1] at ($(p1)!0.5!(NP6)$ |- p1)(join3){};
\draw(p1) to [out=0,in=180] (join3);
\draw(NP6.20) to [out=0,in=180] (join3);
\draw(join3)--(join3 -| challengeOBJ.west)node[right](racewinOBJ){race win$_{Object}$};
\draw(NP4.20)  to [out=0,in=180] (NP6.180)node[xshift=-50,above]{Hare$_{SubjectState}$};
\draw(NP4.-20) -- (NP4.-20 -| TortoiseSS.west)node[right,label={[align=left,yshift=-5,label distance=-5]0:nap$_{Opponent}$$\otimes$\\laziness$_{Opponent}$$\otimes$\\overconfidence$_{Opponent}$}] (napOPP) (napOPP){};
\draw(NP6.-20) -- (NP6.-20 -| TortoiseSS.west)node[label={[align=left]0:Hare$_{SubjectState}$}] (HareSS) (napOPP){};
\draw(NP3.25)--(NP3.25 -| challengeOBJ.west)node[right](FoxSS){Fox$_{SubjectState}$};
\draw(NP3.-25)--(NP3.-25 -| challengeOBJ.west)node[right](fairnessOBJ){fairness$_{Helper}$};
%box
\draw[thick](TortoiseSD.east) [rounded corners=3]--++ (90:10mm)node[above,xshift=180]{$\nu: \left(\cap_{\textrm{NP1}},\cap_{\textrm{NP5}},\cap_{\textrm{NP4}},\cap_{\textrm{NP3}},\cap_{\textrm{NP6}}\right) \rightarrow\, \mathsf{NT}$} -| (challengeOBJ.west) --++(-90:88mm) -| cycle;
\end{tikzpicture}
}
\caption{The wiring diagram $\mathcal{W}_{0}(\cap_{\textrm{NP1}},\cap_{\textrm{NP5}},\cap_{\textrm{NP4}},\cap_{\textrm{NP3}},\cap_{\textrm{NP6}};\mathsf{NT})$ of the narrative trajectory of the Aesop fable \textit{The Hare \& Tortoise}.}
\label{WiringDiagram0}
\end{figure}
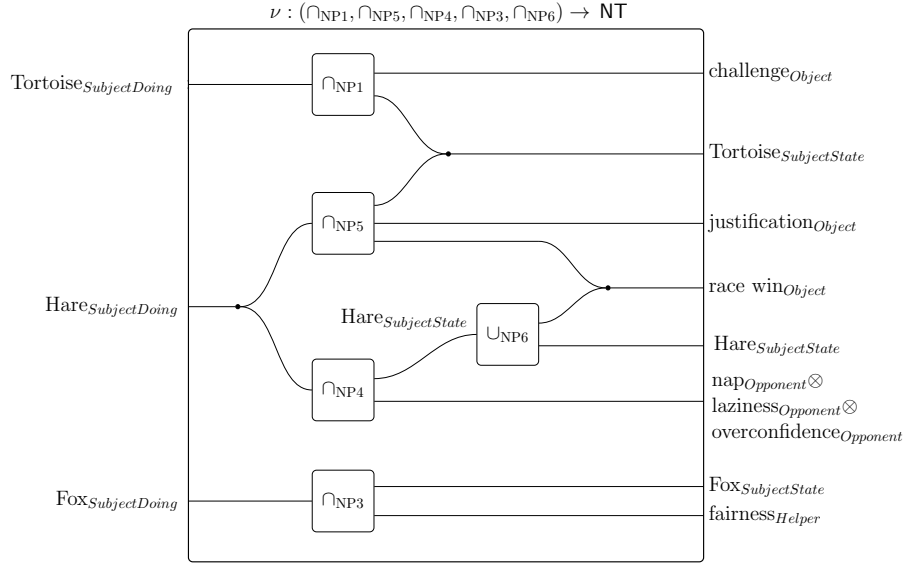
%.....................................................................................................................

\begin{definition}[Special commutative Frobenius structure]\normalfont
Let $(\mathcal{C}, \otimes, I)$ be a symmetric monoidal category. An object $X \in \mathcal{C}$
is said to carry a special commutative Frobenius structure if it is equipped with morphisms:
\begin{equation}
\begin{tikzpicture}
\tikzset{
Aspect/.style={line width=0.75},
B-node/.style={shape=circle,draw,fill=black,draw,minimum size=1mm,inner sep=0}
}
%mu
\begin{scope}
\node[B-node] at (0,0)(dot1){};
\draw[Aspect] (dot1)--++(180:5mm);
\draw[Aspect] (0.5,0.25) to[out=180,in=90](dot1);
\draw[Aspect] (0.5,-0.25) to[out=180,in=-90](dot1);
\node[below=5mm of dot1]{$\delta_{X}: X \rightarrow X \otimes X$};
\end{scope}
\begin{scope}[xshift=33mm]
\node[B-node] at (0,0)(dot2){};
\draw[Aspect] (dot2)--++(180:5mm);
\node[below=5mm of dot2]{$\epsilon_{X} : X\rightarrow I$};
\end{scope}
\begin{scope}[xshift=63mm]
\node[B-node] at (0,0)(dot3){};
\draw[Aspect] (dot3)--++(0:5mm);
\draw[Aspect] (-0.5,0.25) to[out=0,in=90](dot3);
\draw[Aspect] (-0.5,-0.25) to[out=0,in=-90](dot3);
\node[below=5mm of dot3]{$\mu_{X} : X \otimes X \rightarrow X$};
\end{scope}
\begin{scope}[xshift=93mm]
\node[B-node] at (0,0)(dot2){};
\draw[Aspect] (dot2)--++(0:5mm);
\node[below=5mm of dot2]{$\eta_{X} : I\rightarrow X$};
\end{scope}
\end{tikzpicture}
\end{equation}
such that $(X, \eta_{X},\mu_{X})$ is a commutative monoid, and $(X,\delta_{X},\epsilon_{X})$ is a cocommutative comonoid that satisfy the Frobenius law:
\begin{equation}
(\mu_X \otimes \mathrm{id}) \circ (\mathrm{id} \otimes \delta_X)=\delta_X \circ \mu_X=(\mathrm{id} \otimes \mu_X) \circ (\delta_X \otimes \mathrm{id}),
\end{equation}
and specialness axiom:
\begin{equation}
\mu_X \circ \delta_X = \mathrm{id}_X.
\end{equation}
\end{definition}

\begin{remark}
We note here that the Frobenius structure enables operations that are essential for modeling narrative flow: actants may be duplicated, merged, or discarded as they participate in multiple narrative programs. This reflects the semiotic phenomenon of actantial persistence and transformation across a narrative trajectory.
\end{remark}

\begin{definition}[Hypergraph category]\normalfont
A hypergraph category is a symmetric monoidal category $(\mathcal{C}, \otimes, I)$ such that every object $X \in \mathcal{C}$
is equipped with the structure of a special commutative Frobenius monoid, that is, with morphisms
\begin{equation}
\delta : X \to X \otimes X, \quad
\epsilon : X \to I, \quad
\mu : X \otimes X \to X, \quad
\eta : I \to X,
\end{equation}
satisfying the associativity, unitality, commutativity, Frobenius, and specialness axioms. These Frobenius structures are required to be compatible with the symmetric monoidal product, in the sense that for all objects $X,Y \in \mathcal{C}$, the induced Frobenius structure on the tensor product $X \otimes Y$ agrees with the tensor product of the Frobenius structures on $X$ and $Y$. 
\end{definition}

Consider the wiring diagram $\mathcal{W}_{0}$ in the hypergraph category $\mathcal{H}(X)$ of the narrative trajectory of the Aesop fable \textit{The Hare \& the Tortoise} we give in Figure~\ref{WiringDiagram0}. Here, the class of `Subject of doing' ($S_{1}$) actants supply $\mathsf{NT}$ as the actor triple
 \[
 (\mathrm{Tortoise}_{SubjectDoing}\otimes \mathrm{Hare}_{SubjectDoing}\otimes \mathrm{Fox}_{SubjectDoing}),
 \]
with the codomain of $\mathsf{NT}$ a collection of corresponding `Subject of state' ($S_{2}$) and actant junctures (Objects, Opponents, Helpers) that can be individually traced to their $\llbracket \mathrm{NP} \rrbracket$-domains. Through the substitution rule we gave in Definition~\ref{SubstitutionDef}, we see that the diagrams of $\mathcal{W}_{1}$ (Figure~\ref{WD-NP4}) and $\mathcal{W}_{2}$ (Figure~\ref{WD-NP5}) can be substituted with the morphisms $\cap_{\mathrm{NP4}} \in \mathrm{Ob}(\mathcal{W}_{0})$ and $\cap_{\mathrm{NP5}}\in \mathrm{Ob}(\mathcal{W}_{0})$ respectively. Composition via substitution is implemented diagrammatically using both sequential composition and the Frobenius structure, which together allow wiring diagrams to be connected even when interfaces require duplication, merging, or reordering.

More specifically, the substitutions in $\mathcal{W}_{0}$ track the factitive modalities of the Hare's underestimation of the Tortoise. This causes the Hare to be lazy, overconfident and asleep for part of the race (causing-to-be). It also tracks the Hare's lack of consistency and perseverance which in turn allows the Tortoise to justify his initial confidence and claim the race win (causing-to-do). In $\mathcal{W}_{0}$ we also see the impartiality of the Fox as Helper expressed through its actant-isolated $\llbracket \mathrm{NP3} \rrbracket$, while the Frobenius morphisms of 
\[
\delta(\mathrm{Hare}_{SubjectDoing}),\quad \mu(\mathrm{Tortoise}_{SubjectState}),\quad \mu(\mathrm{race \,\,win}_{Object}),
\]
track the syntactic engagement between the Hare and the Tortoise and how particular actants express intersections of complementarity (e.g., the Object `race win').

\begin{definition}[Narrative trajectory]\normalfont
Let $\mathcal{H}(X)$ denote the hypergraph category generated from $\mathbf{FSM}(\mathbf{Disc}(X))$ by equipping each object with a 
special commutative Frobenius structure. A \emph{narrative trajectory} ($\mathsf{NT}$) is a morphism
\begin{equation}
\nu : A \to B
\end{equation}
in $\mathcal{H}(X)$ obtained from a finite family of narrative program generators by composition, tensor product, symmetry, and Frobenius operations. Equivalently, an $\mathsf{NT}$ is a composed wiring diagram whose boxes are narrative program generators and whose wires are role-indexed actants.
\end{definition}

\begin{remark}
The presence of a special communicative Frobenius structure in $\mathcal{H}(X)$, allows for our representation of the narrative trajectory in the Aesop fable to be expressed in $\mathcal{W}_{0}$ via \emph{copy} $(\delta_{X})$ and \emph{join} $(\mu_{X})$ distinctions on types. At the semiotic level, these operations accommodate what Greimas and Court\'{e}s identify as the multiple roles that an actant may assume in a narrative given an actant is ``defined both by the position of the actant in the logical sequence of the narration (its syntactic definition) and its modal investment (its morphological definition).'' \citep[page 6]{GreimasCourtes1979} Thus, in a hypergraph category we can show the multiplicity of roles for an actant and track their connection to particular NP inputs (as virtualisations/actualisations) or outputs (realisations) and therefore the flow and dynamics of their modal engagement or disengagement in a discourse. A narrative trajectory is thus a hypergraph morphism generated by narrative programs.
\end{remark}

\begin{theorem}[Compositional realization of narrative trajectory]\label{theoremNT}\normalfont
Let $X$ be the set of role-indexed actants obtained from the actantial instance $I : \mathcal{A}' \to \mathbf{Set}$ associated to the Aesop fable \textit{The Hare \& the Tortoise}, and let $\mathcal{H}(X)$ be the hypergraph category generated from $\mathbf{FSM}(\mathbf{Disc}(X))$. Let $p_1, \dots, p_7 \in I(P)$ denote the list-valued narrative programs defined in Equations~(\ref{NP1Eq})--(\ref{NP7Eq}), together with the dependency relation $\epsilon(\mathrm{dependsOn})$. Then there exists a morphism
\[
\nu : \bigotimes_{i \in \{1,3,4,5,6\}} \llbracket p_i \rrbracket \;\longrightarrow\; \mathsf{NT}
\]
in $\mathcal{H}(X)$, constructed by composition, tensor product, symmetry, 
and Frobenius operations, such that:
\begin{itemize}
\item $\nu$ composes the generators corresponding to $p_1,\dots,p_7$ 
according to the dependency structure induced by 
$\epsilon(\mathrm{dependsOn})$,
\item dependent narrative programs (e.g. $p_4$, $p_5$) are obtained by 
substitution of wiring diagrams corresponding to their dependencies (i.e., $p_{2}$, $p_{7}$),
\item factitive morphisms (causing-to-be, causing-to-do) mediate the 
composition of nested programs,
\item Frobenius structure enables duplication, merging, and routing of 
actants across the diagram.
\end{itemize}
Moreover, $\nu$ is represented diagrammatically by the wiring diagram 
\[\mathcal{W}_{0}(\cap_{\textrm{NP1}},\cap_{\textrm{NP5}},\cap_{\textrm{NP4}},\cap_{\textrm{NP3}},\cap_{\textrm{NP6}};\mathsf{NT})
\]
given in Figure~\ref{WiringDiagram0}.
\end{theorem}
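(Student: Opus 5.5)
The proof is an explicit construction. I will build $\nu$ generator by generator and then check that the resulting term in $\mathcal{H}(X)$ is exactly the one drawn in Figure~\ref{WiringDiagram0}.

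\emph{Step 1: interpret the data.} The first step fixes the interpretation of the data. For each $p_i$, Definition~\ref{LVNPDef} gives a list $\delta(\mathrm{actorializes})(p_i)=[S_1,S_2,X_1,\dots,X_k]$ read off the table in Figure~\ref{TableN}. This list determines a narrative program generator $\llbracket p_i\rrbracket : S_1\to S_2\otimes X_1\otimes\cdots\otimes X_k$ in $\mathbf{FSM}(\mathbf{Disc}(X))$, labelled $\cap$ or $\cup$.

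\emph{Step 2: lift to $\mathcal{H}(X)$.} Since $\mathcal{H}(X)$ is generated from $\mathbf{FSM}(\mathbf{Disc}(X))$, the inclusion is a strict symmetric monoidal functor (using the universal property in the Proposition on free symmetric monoidal categories, applied to $f(x)=x$). Hence every generator, and every composite of generators, is available in $\mathcal{H}(X)$. Each object carries its Frobenius maps $\delta_x,\mu_x,\epsilon_x,\eta_x$. By the compatibility clause in the definition of a hypergraph category, these extend to the tensor products of actants that appear on the boundary.

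\emph{Step 3: handle the dependent programs.} The basic programs $p_1,p_2,p_3,p_6,p_7$ (those with $\epsilon(\mathrm{dependsOn})=\ast$) are used directly. For the dependent ones, Definition~\ref{DNPDef} gives $\epsilon(\mathrm{dependsOn})(p_4)=p_2$ and $\epsilon(\mathrm{dependsOn})(p_5)=p_7$. I realize them as the composites
\[
\llbracket p_4\rrbracket := \mathrm{ctb}\circ\llbracket p_2\rrbracket,\qquad \llbracket p_5\rrbracket := \mathrm{ctd}\circ\llbracket p_7\rrbracket .
\]
Here $\mathrm{ctb}:\mathrm{Hare}_{SubjectState}\otimes\mathrm{underestimation}_{Opponent}\to \mathrm{Hare}_{SubjectState}\otimes\mathrm{nap}\otimes\mathrm{laziness}\otimes\mathrm{overconfidence}$ is typed so that the codomain of $\llbracket p_2\rrbracket$ matches its domain. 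The morphism $\mathrm{ctd}$ is typed analogously, taking the codomain of $\llbracket p_7\rrbracket$ to $\mathrm{Tortoise}_{SubjectState}\otimes\mathrm{justification}\otimes\mathrm{race\,win}$. These are precisely the wiring diagrams $\mathcal{W}_1$ and $\mathcal{W}_2$ of Figure~\ref{WiringDiagrams2}, and they implement Definition~\ref{SubstitutionDef}. This handles the second and third bullets of the theorem.

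\emph{Step 4: assemble $\nu$.} I write $\nu$ as a composite of three layers.
\begin{enumerate}
\item An input layer $\mathrm{id}_{\mathrm{Tortoise}_{SD}}\otimes\delta_{\mathrm{Hare}_{SD}}\otimes\mathrm{id}_{\mathrm{Fox}_{SD}}$. It copies the Hare's subject of doing so that the Hare can feed both $p_4$ and $p_5$. This follows the wiring in Figure~\ref{WiringDiagram0}, where $\cap_{\mathrm{NP5}}$ takes the Hare's subject of doing as its input, because its dependency $p_7$ is the Hare's program.
\item A middle layer $\llbracket p_1\rrbracket\otimes\llbracket p_5\rrbracket\otimes\llbracket p_4\rrbracket\otimes\llbracket p_3\rrbracket$. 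This is followed by $\llbracket p_6\rrbracket$ postcomposed on the $\mathrm{Hare}_{SubjectState}$ output wire of $\llbracket p_4\rrbracket$, tensored with identities on the other wires.
\item An output layer built from symmetries and the merges $\mu_{\mathrm{Tortoise}_{SS}}$ and $\mu_{\mathrm{race\,win}}$. These route the wires to the ordered codomain $\mathsf{NT}$.
\end{enumerate}
The generators of $p_2$ and $p_7$ occur inside $\llbracket p_4\rrbracket$ and $\llbracket p_5\rrbracket$, so all of $p_1,\dots,p_7$ are composed according to $\epsilon(\mathrm{dependsOn})$. This gives the first and fourth bullets. Whether $p_6$ is counted as a separate box or as postcomposition depends on how the domain notation $\bigotimes_i\llbracket p_i\rrbracket$ is read. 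I will read it operadically, in the sense of \citet{RupelSpivak2013}: $\nu$ is the operation of the wiring diagram $\mathcal{W}_0$ with inner boxes $\cap_{\mathrm{NP1}},\cap_{\mathrm{NP5}},\cap_{\mathrm{NP4}},\cap_{\mathrm{NP3}},\cup_{\mathrm{NP6}}$, evaluated at those generators.

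\emph{Main obstacle.} The hardest step is the last claim, that $\nu$ is represented by $\mathcal{W}_0$, together with checking that every composite is well-typed. Two things must be shown.
\begin{itemize}
\item The string-diagram reading of the layered term is the picture in Figure~\ref{WiringDiagram0}. I would argue this by the soundness of string diagrams for hypergraph categories: terms equal modulo the Frobenius and symmetric monoidal axioms have the same cospan-with-boxes diagram. The merges at $\mathrm{Tortoise}_{SubjectState}$ and race win therefore identify wires independently of the order in which they are applied.
\item Each box has the correct arity. This is a finite check against Figure~\ref{TableN}, done wire by wire.
\end{itemize}
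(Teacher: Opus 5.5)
Your route is the paper's own: list-derived generators, dependent programs realized through $\mathrm{ctb}$ and $\mathrm{ctd}$, Frobenius copy and merge, then assembly into $\mathcal{W}_0$. Written out explicitly, though, one step fails. In your middle layer you postcompose $\llbracket p_6\rrbracket$ on the $\mathrm{Hare}_{SubjectState}$ output of $\llbracket p_4\rrbracket$. By your own Step 1, NP6's list $[\mathrm{Hare}_{SubjectDoing},\mathrm{Hare}_{SubjectState},\mathrm{race\,win}_{Object}]$ gives $\llbracket p_6\rrbracket:\mathrm{Hare}_{SubjectDoing}\to\mathrm{Hare}_{SubjectState}\otimes\mathrm{race\,win}_{Object}$. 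Since $\mathrm{Hare}_{SubjectState}$ and $\mathrm{Hare}_{SubjectDoing}$ are distinct objects of $\mathbf{Disc}(X)$, this composite does not exist. The Frobenius structure cannot bridge the gap, because its maps act object by object. The only structural morphisms $\mathrm{Hare}_{SubjectState}\to\mathrm{Hare}_{SubjectDoing}$ are disconnected ones such as $\eta_{\mathrm{Hare}_{SubjectDoing}}\circ\epsilon_{\mathrm{Hare}_{SubjectState}}$, which cut the wire rather than draw the connection in Figure~\ref{WiringDiagram0}. This link is also not induced by the dependency data, since $\epsilon(\mathrm{dependsOn})(p_6)=\ast$, so your argument for the first bullet does not cover it. The ``finite check, wire by wire'' that you defer fails exactly here. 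The same mismatch is already drawn in Figures~\ref{WiringDiagram0} and~\ref{WiringDiagramExpanded}, and the paper's sketch never checks types. A correct proof must therefore change something. One option is to feed $\llbracket p_6\rrbracket$ from a third copy of $\mathrm{Hare}_{SubjectDoing}$, via $(\delta\otimes\mathrm{id})\circ\delta$ on that input. This puts $p_6$ in parallel with $p_4$, as its basic status suggests, and you then merge or discard the surplus $\mathrm{Hare}_{SubjectState}$. The other option is to adjoin an extra generator $\mathrm{Hare}_{SubjectState}\to\mathrm{Hare}_{SubjectDoing}$. Either way, the final clause holds for a modified diagram, not literally for $\mathcal{W}_0$.

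Two smaller typing points also need to be made explicit. First, $\llbracket p_5\rrbracket$ receives two non-parallel meanings. Step 1 gives the list generator with domain $\mathrm{Tortoise}_{SubjectDoing}$, while Step 3 gives $\mathrm{ctd}\circ\llbracket p_7\rrbracket$ with domain $\mathrm{Hare}_{SubjectDoing}$. Only the latter fits the box $\cap_{\mathrm{NP5}}$ in Figure~\ref{WiringDiagram0}, so you must state that dependent programs are \emph{defined} as factitive composites rather than as list generators. The literal composites $\llbracket p_4\rrbracket\circ\llbracket p_2\rrbracket$ and $\llbracket p_5\rrbracket\circ\llbracket p_7\rrbracket$ of Definition~\ref{SubstitutionDef} are ill-typed, so $\mathrm{ctb}$ and $\mathrm{ctd}$ carry the whole substitution. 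Second, Step 2 cannot rest on the universal property of $\mathbf{FSM}(\mathbf{Disc}(X))$. That category has no morphism $S_1\to S_2\otimes X_1\otimes\cdots\otimes X_k$ at all, since its only morphisms are permutations between words with the same letters. The program generators, together with $\mathrm{ctb}$ and $\mathrm{ctd}$ at the specific types you assign them, must be adjoined as a signature, with $\mathcal{H}(X)$ the free hypergraph category on it. They then lie in $\mathcal{H}(X)$ by construction. Your operadic reading of the ill-formed domain $\bigotimes_i\llbracket p_i\rrbracket$ is a sensible repair and can stay.
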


\begin{proof}[Proof sketch]
Each narrative program $p_i$ determines a generator $\llbracket p_i \rrbracket$ in $\mathcal{H}(X)$. The dependency relation 
$\epsilon(\mathrm{dependsOn})$ induces a partial order on these generators, which determines the order of composition. Dependent programs are incorporated via substitution of wiring diagrams, realized as composition in $\mathcal{H}(X)$. Factitive morphisms specify 
the mode of this composition, while Frobenius operations provide the necessary structural maps to duplicate and merge actants. The resulting composite morphism $\nu$ is therefore well-defined and is represented by the wiring diagram $\mathcal{W}_{0}$ in Figure~\ref{WiringDiagram0}.
\end{proof}

%FIG....................................................................................................................
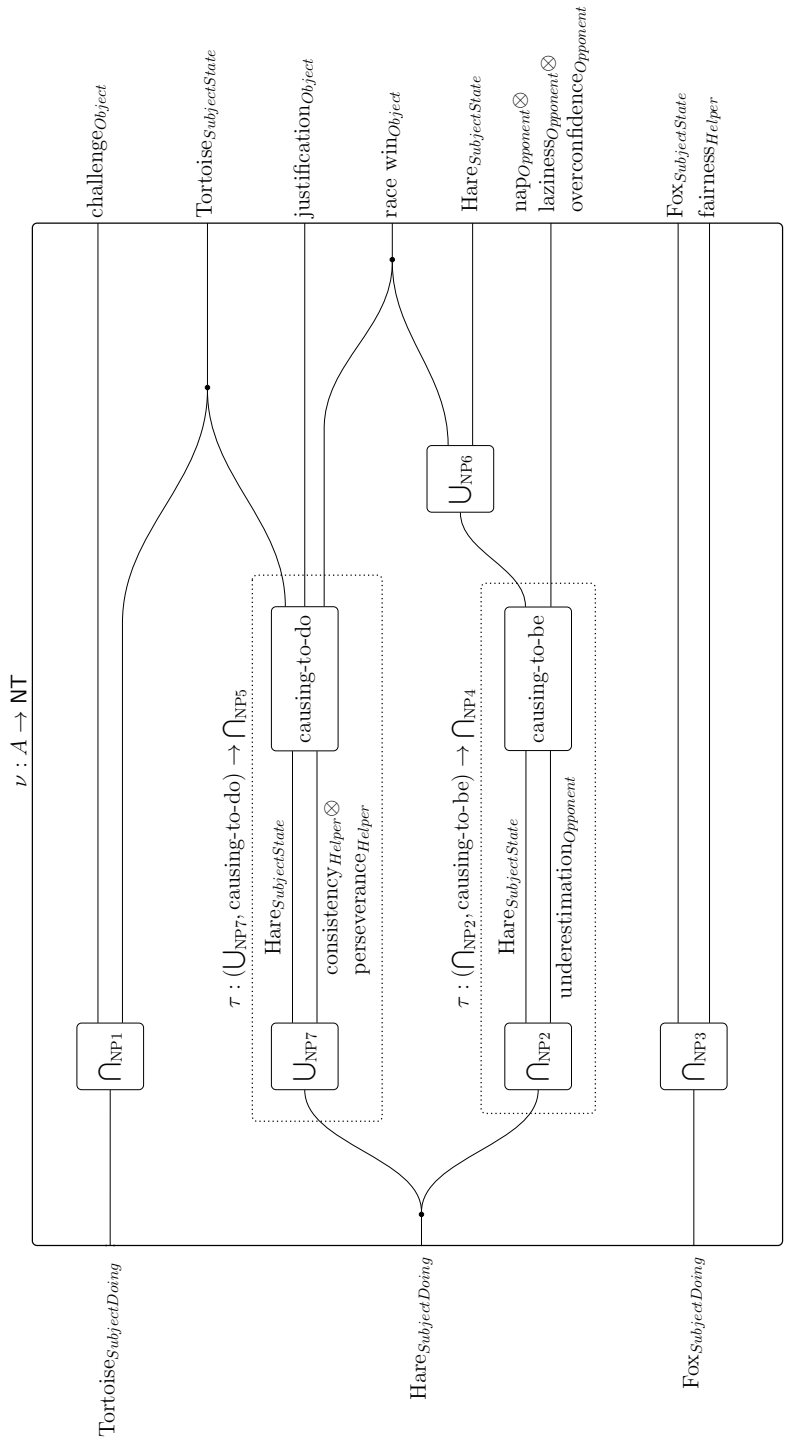
\begin{sidewaysfigure}
\centering
\resizebox{\textwidth}{!}{
\begin{tikzpicture}[font=\Large,yscale=1.75]
\node[rounded corners=3,draw,minimum size=15mm,inner sep=5] at (5,2.5) (NP1){$\bigcap_{\mathrm{NP1}}$};
\node[rounded corners=3,draw,minimum size=15mm,inner sep=5] at (5,0) (NP5){$\bigcup_{\mathrm{NP7}}$};
\node[rounded corners=3,draw,minimum size=15mm,inner sep=5] at (5,-3) (NP4){$\bigcap_{\mathrm{NP2}}$};
\node[rounded corners=3,draw,minimum size=15mm,inner sep=5] at (5,-5) (NP3){$\bigcap_{\mathrm{NP3}}$};
\node[label={180:Tortoise$_{SubjectDoing}$},left=35mm of NP1]  (TortoiseSD){};
\draw(TortoiseSD.east)--++(0:8mm) coordinate(join1) to [out=0,in=180] (NP1.west);
\node[shape=circle,draw,fill=black,draw,minimum size=1mm,inner sep=0,left=35mm of NP5] at ($(NP5)!0.5!(NP4)$) (join1a){}; 
\draw(join1a) -- (join1a -| TortoiseSD.east)node[label={180:Hare$_{SubjectDoing}$}] (HareSD){};
\draw(join1a) to [out=0,in=180] (NP5.west);
\draw(join1a) to [out=0,in=180] (NP4.west);
\draw(NP3.west) -- (NP3.west -| TortoiseSD.east)node[label={180:Fox$_{SubjectDoing}$}] (FoxSD) (FoxSD){};
\node[rounded corners=3,draw,minimum size=15mm,inner sep=5] at (18,-2) (NP6){$\bigcup_{\mathrm{NP6}}$};
\draw(NP1.20) --++ (0:180mm)node[right](challengeOBJ){challenge$_{Object}$};
\node[shape=circle,draw,fill=black,draw,minimum size=1mm,inner sep=0,right=150mm of NP5.east] at ($(NP1)!0.5!(NP5)$ |- join1) (join2){}; 
\draw(NP1.-20) --++(0:90mm) to [out=0,in=180] (join2);
\draw(join2)--(join2 -| challengeOBJ.west)node[right](TortoiseSS){Tortoise$_{SubjectState}$};
\node[shape=circle,draw,fill=black,draw,minimum size=1mm,inner sep=0,right=90mm of p1] at ($(p1)!0.5!(NP6)$ |- p1)(join3){};
\draw(NP6.20) to [out=0,in=180] (join3);
\draw(join3)--(join3 -| challengeOBJ.west)node[right](racewinOBJ){race win$_{Object}$};
\draw(NP6.-20) -- (NP6.-20 -| TortoiseSS.west)node[label={[align=left]0:Hare$_{SubjectState}$}] (HareSS) (napOPP){};
\draw(NP3.25)--(NP3.25 -| challengeOBJ.west)node[right](FoxSS){Fox$_{SubjectState}$};
\draw(NP3.-25)--(NP3.-25 -| challengeOBJ.west)node[right](fairnessOBJ){fairness$_{Helper}$};
\node[rounded corners=3,draw,minimum size=15mm,inner sep=5] at (13.5,-3) (ctb){causing-to-be};
\draw(NP4.20) -- (NP4.20 -| ctb.west)node[pos=0.5,above]{Hare$_{SubjectState}$};
\draw(NP4.-20) -- (NP4.-20 -| ctb.west)node[pos=0.5,below]{underestimation$_{Opponent}$};
\draw(ctb.-10) -- (ctb.-10 -| TortoiseSS.west)node[label={[align=left]0:nap$_{Opponent}$$\otimes$\\laziness$_{Opponent}$$\otimes$\\overconfidence$_{Opponent}$}] (napOPP) (napOPP){};
\draw(ctb.10) to [out=0,in=180] (NP6);
\node [draw, rectangle, fit=(NP4) (ctb), inner sep=15pt,rounded corners=3, dotted,thick,label={90:$\tau: \left(\bigcap_{\textrm{NP2}},\textrm{causing-to-be}\right)\rightarrow \bigcap_{\textrm{NP4}}$}] {};
\node[rounded corners=3,draw,minimum size=15mm,inner sep=5] at (13.5,0) (ctd){causing-to-do};
\draw(ctd.-15)--++(0:40mm) to [out=0,in=180] (join3);
\draw(ctd.0)--(ctd.0 -| challengeOBJ.west)node[right](justificationOBJ){justification$_{Object}$};
\draw(NP5.-20) -- (NP5.-20 -| ctd.west)node[below,pos=0.5,align=left]{consistency$_{Helper}$$\otimes$\\perseverance$_{Helper}$};
\draw(ctd.15) to [out=0,in=180] (join2);
\draw(NP5.20) -- (NP5.20 -| ctd.west)node[pos=0.5,above]{Hare$_{SubjectState}$};
\node [draw, rectangle, fit=(NP5) (ctd), inner sep=20pt,rounded corners=3, dotted,thick,label={90:$\tau: \left(\bigcup_{\textrm{NP7}},\textrm{causing-to-do}\right)\rightarrow \bigcap_{\textrm{NP5}}$},yshift=-8] {};
%box
\draw[thick](TortoiseSD.east) [rounded corners=3]--++ (90:10mm)node[above,xshift=330]{$\nu : A \to \mathsf{NT}$} -| (challengeOBJ.west) --++(-90:88mm) -| cycle;
\end{tikzpicture}
}
\caption{The expanded wiring diagram of $\nu : A \to \mathsf{NT}$ that describes the narrative trajectory of the Aesop fable \textit{The Hare \& Tortoise} after substitution of $\llbracket NP4 \rrbracket$ and $\llbracket NP5 \rrbracket$.}
\label{WiringDiagramExpanded}
\end{sidewaysfigure}
%.....................................................................................................................

\begin{remark}
The wiring diagram in Figure~\ref{WiringDiagram0} presents the narrative trajectory in a factorized form, in which dependent narrative programs (e.g. $\llbracket \mathrm{NP4}\rrbracket$ and $\llbracket \mathrm{NP5}\rrbracket$) are represented as single generators. Figure~\ref{WiringDiagramExpanded} provides an expanded representation of the same morphism, in which these generators are replaced by their corresponding wiring diagrams (cf. Figure~\ref{WiringDiagrams2}). This distinction mirrors the difference between a syntactic description 
of a process and its fully expanded operational form. The dotted regions indicate the sites of substitution. Hence, these two diagrams represent the same morphism 
\[
\nu \in \mathcal{H}(X),
\]
related by substitution of generators, rather than by additional composition.
\end{remark}

\section{Discussion}

In Theorem~\ref{theoremNT} we establish that the narrative trajectory of the Aesop fable is not a sequence of isolated transformations, but a single compositional morphism whose structure is determined by the interaction of narrative programs. Thus, a narrative trajectory is realized as a hypergraph morphism generated by narrative programs. This consequently allows us to view the morphism $\nu$ as a utility for what Parret identifies as the purpose of semiotic investigation as a means to ``reveal a typology of competent subjects with their specific modal trajectories.'' \citep[page 112]{Parret1983} Indeed Greimas and Court\'{e}s argue that a narrative trajectory, composed of multiple narrative programs, may itself be regarded as an actant---what they term a \emph{functional} or \emph{syntagmatic} actant:
\begin{quote}
The narrative trajectory contains as many actantial roles as there are NPs constituting it. Consequently the set of actantial roles of a narrative trajectory may be called [an] actant or, in order to distinguish it from the NP's syntactic actants, [a] functional (or syntagmatic) actant. So defined, the actant is not a concept which is fixed once and for all, but is a virtuality subsuming an entire narrative trajectory. \citep[page 207]{GreimasCourtes1979}
\end{quote}
In our framework, this observation admits a natural categorical interpretation. A narrative trajectory is represented as a morphism
\[
\nu : A \to B
\]
in the hypergraph category $\mathcal{H}(X)$, constructed from narrative program generators $\llbracket p_{i} \rrbracket$ via composition, tensor product, and Frobenius operations. The collection of actants participating across the trajectory corresponds to the family of wires that are composed, duplicated, and merged within this morphism. 

From this perspective, the trajectory $\nu$ itself determines a higher-order actantial unit: not a single element of $X$, but a structured configuration of actants given by the interface $(A,B)$ together with the internal wiring of $\nu$. In particular, $\nu$ may be regarded as an actant in a higher-level category of processes, where morphisms themselves become the carriers of actantial roles. This formalizes Greimas's notion of a functional actant as a \emph{virtual}  entity: one that does not correspond to a fixed instance, but to a compositional structure subsuming an entire narrative trajectory.

It follows then that narrative trajectories exhibit a form of categorical lifting, whereby actants at the level of objects give rise to actants at the level of morphisms. Diagrammatically, this corresponds to interpreting an entire wiring diagram as a single unit, whose external interface defines its actantial role, while its internal structure encodes the interactions of its constituent actants. Thus, the functional actant is realized as a compositional diagram rather than as an individual entity.

\section{Conclusion}

In this article we introduced Greimasian narrative programs and proposed a categorical formalization that makes their compositional structure explicit. Building on the reconstruction of the actantial model as the categorical schema $\mathcal{A}$, we introduced Kleisli semantics over the List and Maybe monads to represent narrative programs as non-atomic, list-valued structures together with a dependency relation encoding their recursive composition. This provided a minimal yet expressive framework in which the internal structure of narrative programs is captured at the level of data.

To account for the dynamics of discoursivization, we then lifted these constructions into a diagrammatic setting by freely generating a symmetric monoidal, and subsequently hypergraph category $\mathcal{H}(X)$ from a discrete category of role-indexed actants. In this setting, narrative programs act as generators of morphisms, and their composition is realized through wiring diagrams. The introduction of Frobenius structure allowed for the duplication, merging, and routing of actants, reflecting the persistence and transformation of roles across a narrative.

Within this framework, a narrative trajectory is understood as a single compositional morphism constructed from interacting narrative programs. This provides a precise mathematical interpretation of Greimas's claim that narratives are formed through the combination of elementary syntactic units, while also extending his insight that a trajectory itself may be regarded as a functional or syntagmatic actant. Here, such an actant is realized not as an individual entity, but as a structured process encoded by a wiring diagram.

More broadly, this work suggests that structural semiotics admits a natural interpretation in terms of compositional systems, in which meaning emerges through the interaction and transformation of relational units. We anticipate that future work may explore extensions of this approach to richer modal structures, multi-agent narratives, and computational implementations, as well as further connections to operadic and higher-categorical formulations of narrative composition. By bringing together categorical schemas, Kleisli semantics, and diagrammatic reasoning, we provide a unified framework that connects the static representation of narrative elements with their dynamic realization in discourse.

From our perspective, the Greimasian project may be reinterpreted as an early intuition of compositionality. That is, meaning is not located in isolated units, but arises through structured relations and their transformations. The present framework makes this intuition precise, showing that narrative programs and their trajectories can be understood as morphisms in a compositional category. Narrative meaning is thus not merely represented, but constructed as a compositional process.

%%%%%%%%%%%%%%%%%%%%%%%%%%%%%%%%%%%%%%%%%%%%%%%%%%%%%%%%%%%%%%%%%
\bibliographystyle{authordate1}
\bibliography{CategoryTheoretic.bib}
%%%%%%%%%%%%%%%%%%%%%%%%%%%%%%%%%%%%%%%%%%%%%%%%%%%%%%%%%%%%%%%%%
\clearpage
\appendix

\section{Appendix}
\subsection*{The Hare \& the Tortoise}\label{Hare+Tortoise}

A Hare was making fun of the Tortoise one day for being so slow.
``Do you ever get anywhere?'' he asked with a mocking laugh.
``Yes,'' replied the Tortoise, ``and I get there sooner than you think. I'll run you a race and prove it.''\\

\noindent The Hare was much amused at the idea of running a race with the Tortoise, but for the fun of the thing he agreed. So the Fox, who had consented to act as judge, marked the distance and started the runners off.\\

\noindent The Hare was soon far out of sight, and to make the Tortoise feel very deeply how ridiculous it was for him to try a race with a Hare, he lay down beside the course to take a nap until the Tortoise should catch up.\\

\noindent The Tortoise meanwhile kept going slowly but steadily, and, after a time, passed the place where the Hare was sleeping. But the Hare slept on very peacefully; and when at last he did wake up, the Tortoise was near the goal. The Hare now ran his swiftest, but he could not overtake the Tortoise in time.\\

\noindent \textit{The race is not always to the swift.} \citep{Aesop1884}
\end{document}